\numberwithin{equation}{section}
\newtheorem{thm}{Theorem}[section]
\newtheorem{prop}[thm]{Proposition}
\newtheorem{cor}[thm]{Corollary}
\newtheorem{lem}[thm]{Lemma}
\newtheorem{conj}[thm]{Conjecture}
\newtheorem{obs}[thm]{Observation}
\theoremstyle{definition}
\newcounter{countcases}
\newcommand{\PBS}[1]{\let\temp=\\#1\let\\=\temp}  
\numberwithin{figure}{section}
\begin{document}
\author{Brendon Rhoades}
\email{brhoades@math.mit.edu}
\address{Brendon Rhoades, Department of Mathematics, Massachusetts Institute of Technology, Cambridge, MA, 02139}
\title[Cyclic sieving and cluster multicomplexes]
{Cyclic sieving and cluster multicomplexes}
\keywords{fixed point enumeration, simplicial complex, finite type cluster algebra,
polygon dissection}
\subjclass{05E18}


\bibliographystyle{../elsart-num-sort}

\date{\today}

\begin{abstract}
Reiner, Stanton, and White \cite{RSWCSP} proved results regarding the enumeration of 
polygon dissections up to rotational symmetry.  Eu and Fu \cite{EuFu} generalized these
results to Cartan-Killing types other than A by means of actions of 
deformed
Coxeter elements
on cluster complexes of Fomin and Zelevinsky \cite{FZY}.  The Reiner-Stanton-White and
Eu-Fu results were proven using direct counting arguments.  We give representation 
theoretic proofs of closely related results using the notion of noncrossing and 
seminoncrossing tableaux due to Pylyavskyy \cite{PN} as well as some geometric
realizations of finite type cluster algebras 
due to Fomin and Zelevinsky
\cite{FZClusterII}. 
\end{abstract}
\maketitle

\section{Introduction and Background}

Let $X$ be a finite set and let $C = \langle c \rangle$ be a finite cyclic group acting on $X$ with 
distinguished generator $c$.  Let $X(q) \in \mathbb{N}[q]$ be a polynomial in $q$ with nonnegative 
integer coefficients and let $\zeta \in \mathbb{C}$ be a root of unity with the same multiplicative 
order as $c$.  Following Reiner, Stanton, and White \cite{RSWCSP} we say that the triple $(X, C, X
(q))$ \emph{exhibits the cyclic sieving phenomenon (CSP)} if for all integers $d \geq 0$, the 
cardinality of the fixed point set $X^{c^d}$ is equal to the polynomial evaluation $X(\zeta^d)$.  
Given a finite set $X$ equipped with the action of a finite cyclic group $C = \langle c \rangle$, since 
the cycle type of the image of $c$ under the canonical homomorphism $C \rightarrow \mathfrak
{S}_X$ is determined by the fixed point set sizes $|X^{c^d}|$ for $d \geq 0$, finding a polynomial $X
(q)$ such that $(X, C, X(q))$ exhibits the CSP completely determines the enumerative structure of
 the action of $C$ on $X$.  It can be shown that given a finite set $X$ 
with the action of a finite cyclic group $C = \langle c \rangle$, it is always possible to find a
polynomial $X(q)$ such that $(X, C, X(q))$ exhibits the CSP:  for example, if the order of $C$
is $n$, we can take $X(q) = \sum_{i = 0}^{n-1} a_i q^i$, where $a_i$ is equal to the number of $C$-orbits
in $X$ whose stabilizer order divides $i$ \cite[Definition-Proposition, p. 1]{RSWCSP}.  
The interest in proving a CSP lies in finding 
a `nice' formula for $X(q)$, ideally making no explicit reference to the action of $C$ on $X$.  
These `nice' formulas for $X(q)$ are typically either simple sums or products of $q$-analogues of numbers
or binomial coefficients or are generating functions for some natural statistic $stat: X \rightarrow \mathbb{N}$, i.e.,
$X(q) = \sum_{x \in X} q^{stat(x)}$.  In this paper, the set $X$ will typically consist of objects related to noncrossing dissections of a regular $n$-gon and the group $C$ will act by 
an appropriate version of rotation.

Let $\Phi$ be a root system and let $\Pi \subset \Phi$ be a system of 
simple roots within $\Phi$.  The choice of the simple system $\Pi$ partitions
$\Phi$ into two subsets $\Phi = \Phi_{>0} \uplus \Phi_{< 0}$ of 
\emph{positive} and \emph{negative} roots.  Let 
$\Phi_{\geq -1} := \Phi_{> 0} \uplus - \Pi$ denote the set of
\emph{almost positive} roots, i.e., roots in $\Phi$ which are either positive
or negatives of simple roots.  In 2003 Fomin and Zelevinsky 
\cite{FZY}
introduced
a simplicial complex $\Delta(\Phi)$ called the \emph{cluster complex}
whose ground set is $\Phi_{\geq -1}$.  A certain 
deformed Coxeter element $\tau$ 
in the Weyl group of $\Phi$ 
arising from a bipartition of the associated Dynkin diagram
acts on the set $\Phi_{\geq -1}$ of almost
positive roots and induces a (simplicial) action on the complex
$\Delta(\Phi)$ which preserves dimension.  When $\Phi$ is of type ABCD,
the action of $\tau$ on the set of faces of $\Delta(\Phi)$ of a fixed
dimension is isomorphic to the action of the rotation operator
on a certain set of noncrossing polygon dissections with a fixed number
of edges (where in type D the definitions of `rotation' and `noncrossing'
differ slightly from those for classical polygon dissections).  We outline
the corresponding actions on polygon dissections in types ABCD.  For
$n \geq 3$, we denote by $\mathbb{P}_n$ the regular $n$-gon.

For $n \geq 3$,
let $\Phi_{A_{n-3}}$ be a root system of type A$_{n-3}$.  
For $k \geq 0$ fixed,
the action of
the Coxeter element $\tau$ on the set 
of $(k-1)$-dimensional faces of the cluster complex
$\Delta(\Phi_{A_{n-3}})$ 
is isomorphic to the action of rotation on the set of noncrossing
dissections of $\mathbb{P}_n$ with exactly $k$ diagonals. 

It turns out that the action of the Coxeter element $\tau$ on the
cluster complex $\Delta(\Phi)$ is the same for $\Phi$ of type B or C.
For reasons related to geometric realizations of the types B and C
cluster algebras \cite{FZClusterII} we will use the descriptor
`C' in this paper.
For $n \geq 2$, let $\Phi_{C_{n-1}}$ be a root system of type C$_{n-1}$.
For $k \geq 0$, the action of $\tau$ on the $(k-1)$-dimensional faces
of $\Delta(\Phi_{C_{n-1}})$ is isomorphic to the action of rotation on
the set of centrally symmetric dissections of $\mathbb{P}_{2n}$ with
exactly
$k$ diagonals, where a pair of centrally symmetric nondiameter diagonals 
of $\mathbb{P}_{2n}$ counts as a \emph{single} diagonal.

Let $\Phi_{D_n}$ denote a root system of type D$_n$ for $n \geq 2$.  To 
realize the action of the Coxeter element $\tau$ on the cluster complex
$\Delta(\Phi_{D_n})$ as an action on dissection-like objects, we must
slightly modify our definitions of noncrossing dissections and rotation.
A \emph{D-diagonal} in $\mathbb{P}_{2n}$ is either a pair of 
centrally symmetric nondiameter diagonals or a diameter colored one
of two colors, solid/blue or dotted/red.  Two D-diagonals are said to \emph{cross}
if they cross in the classical sense, except that distinct diameters
of the same color do not cross and identical diameters of different 
colors do not cross.  A \emph{D-dissection} of $\mathbb{P}_{2n}$
is a collection of pairwise noncrossing D-diagonals in $\mathbb{P}_{2n}$.
\emph{D-rotation} acts on D-dissections by classical rotation, except that
D-rotation switches the color of diameters.  In particular, the action of
D-rotation on D-dissections of $\mathbb{P}_{2n}$ has order 
$n$ if $n$ is even and order $2n$ if $n$ is odd.  The action of the 
Coxeter element $\tau$ on the set of $(k-1)$-dimensional faces of the 
cluster complex $\Delta(\Phi_{D_n})$ is isomorphic to the action
of D-rotation on the set of D-dissections of $\mathbb{P}_{2n}$ with 
exactly $k$ D-diagonals (where a pair of uncolored centrally 
symmetric nondiameters counts as a \emph{single} D-diagonal).  

The following CSPs involving the actions of deformed Coxeter elements 
on cluster complexes were proven by Reiner, Stanton, and White 
\cite{RSWCSP} in the case of type A and by Eu and Fu \cite{EuFu}
in the cases of types B/C and type D.  We use the standard $q$-analog notation
\begin{align*}
[m]_q &:= 1 + q + \cdots + q^{m-1}, \\
[m]!_q &:= [m]_q [m-1]_q \cdots [2]_q [1]_q, \\
{m \brack r}_q &:= \frac{ [m]!_q } { [r]!_q [m-r]!_q },
\end{align*}
for $m \geq r > 0$.

\begin{thm}  Fix $k \geq 0$. \\
1. \cite[Theorem 7.1]{RSWCSP}
For $n \geq 3$
let $X$ be the set of noncrossing
dissections of $\mathbb{P}_n$ with exactly $k$  diagonals.  Let $C = \mathbb{Z}_n$ act on $X$ by rotation.  The triple 
$(X, C, X(q))$ exhibits the cyclic sieving phenomenon, where
\begin{align*}
X(q) &= \frac{1}{[n+k]_q} {n+k \brack k+1}_q {n-3 \brack k}_q \\
  &= \frac{[n+k-1]!_q}{[k]!_q [k+1]!_q [n-k-3]!_q [n-1]_q [n-2]_q}.
\end{align*} \\
2. \cite[Theorem 4.1, $s = 1$]{EuFu}
For $n \geq 2$
let $X$ be the set of centrally symmetric dissections of
$\mathbb{P}_{2n}$ with 
exactly
$k$ noncrossing diagonals, where a pair of centrally symmetric nondiameter diagonals counts
as a \emph{single} diagonal.  
Let the cyclic group $C = \mathbb{Z}_n$ of order $n$ act on $X$ by rotation.  The triple 
$(X, C, X(q))$ exhibits the cyclic sieving phenomenon, where
\begin{equation*}
X(q) = {n+k+1 \brack k}_{q^2} {n+1 \brack k}_{q^2}.
\end{equation*} \\
3. \cite[Theorem 5.1, $s = 1$]{EuFu}
For $n \geq 2$
let $X$ be the set of D-dissections of $\mathbb{P}_{2n}$ with exactly $k$ D-diagonals.  Let the cyclic group $C = \mathbb{Z}_{2n}$ 
of order $2n$
act on $X$ by D-rotation.  The triple $(X, C, X(q))$ exhibits the cyclic sieving phenomenon, where
\begin{align*}
X(q) = & {n+k-1 \brack k}_{q^2} {n-1 \brack k}_{q^2} + {n + k - 1 \brack k}_{q^2} {n-2 \brack k-1}_{q^2} \cdot q^n \\
   & + {n+k-1 \brack k}_{q^2} {n-2 \brack k-2}_{q^2} + 
     {n+k-2 \brack k}_{q^2}{n-2 \brack k-2}_{q^2} \cdot q^n.
\end{align*}
\end{thm}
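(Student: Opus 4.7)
For each of the three parts the plan is to follow the same representation-theoretic blueprint: (i) put $X$ in bijection with a canonical basis $\mathcal{B}$ of a representation $V$ of a complex reductive group $G$; (ii) identify the cyclic generator $c$ acting on $X$ with the action on $\mathcal{B}$ of a particular semisimple element $w \in G$ that is regular in the sense of Springer; (iii) recognize $X(q)$ as a principal specialization of the character of $V$; and (iv) invoke Springer's regular element theorem to conclude $|X^{c^d}| = X(\zeta^d)$ for all $d \geq 0$.

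For part 1, the basis $\mathcal{B}$ will consist of Pylyavskyy's noncrossing tableaux of a rectangular shape whose hook-content formula reproduces the stated $q$-polynomial---the factors ${n+k \brack k+1}_q$ and ${n-3 \brack k}_q$ together with the prefactor $\frac{1}{[n+k]_q}$ are exactly what one reads off from the principal specialization of the Weyl character of the appropriate polynomial $GL_m$-module. Polygon rotation should correspond to jeu-de-taquin promotion on the tableau basis, which is known to be implemented by a diagonal element of the maximal torus; Springer's theorem then completes the argument.

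For part 2, I would use Pylyavskyy's seminoncrossing tableaux as a symplectic analogue, indexing a weight basis of a representation of $Sp_{2n}$. Rotation of $\mathbb{P}_{2n}$ should be realized by a type-C Coxeter element, and the uniform appearance of $q^2$ in the formula ${n+k+1 \brack k}_{q^2} {n+1 \brack k}_{q^2}$ reflects the folding $A_{2n-1} \to C_n$ of Dynkin diagrams---effectively, the centrally symmetric dissections are type-$A_{2n-1}$ noncrossing objects pulled back through the fold. For part 3, the main tool will be Fomin--Zelevinsky's geometric realization of the type D cluster algebra, in which D-diagonals correspond to cluster variables on a double cover of the type C model; the four summands in $X(q)$ are expected to correspond to a four-part decomposition of the ambient representation sorted by whether a diameter is present and which of the two colors it carries.

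The principal obstacle is step (ii) in part 3: verifying that D-rotation, with its color-switching convention and doubled cyclic order $2n$, is implemented by a Springer-regular element on the type D cluster model. The diameter-coloring has no counterpart in the type A or C cases; resolving it will require leveraging the double-cover structure present in the Fomin--Zelevinsky geometric realization, and the appearance of $q^n$ in two of the four summands strongly suggests that this cover carries a $\mathbb{Z}/2$-action on the representation whose nontrivial eigenvalue is tracked by $q^n$. Once this identification is made, the character evaluation and Springer's theorem should yield the CSP exactly as in the easier types.
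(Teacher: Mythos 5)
There is a genuine gap, and it starts with what the representation-theoretic blueprint can actually reach. The statement you are proving concerns \emph{classical} dissections: exactly $k$ diagonals, no multiplicities, boundary edges not in play. Your step (i) requires a module with a basis indexed by precisely this set, permuted by a group element modeling rotation. No such module appears here: the natural bases coming from Pylyavskyy's tableaux and from the Fomin--Zelevinsky cluster-monomial realizations are indexed by \emph{multi}dissections (cluster monomials, i.e.\ faces of the multicomplex, with boundary variables allowed), and the span of squarefree monomials of a fixed degree is not an invariant subspace, so one cannot simply restrict. This is exactly why the paper does not prove Theorem 1.1 at all --- it cites the direct enumerative proofs of Reiner--Stanton--White and Eu--Fu --- and instead proves CSPs for A-, C-, and D-multidissections (Theorems 2.5, 3.4, 4.6), remarking that recovering the fixed-point counts of Theorem 1.1 from these requires a separate enumerative sieve argument, a step your plan omits entirely. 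Relatedly, your step (iii) misidentifies the polynomials: $\frac{1}{[n+k]_q}{n+k \brack k+1}_q{n-3 \brack k}_q$ is a $q$-analog of the number of standard tableaux of shape $(k+1)^2 1^{n-k-3}$, whereas the principal specialization $q^{-k}s_{(k,k)}(1,q,\dots,q^{n-1})$ (which is what the character trace computes) counts multidissections/plane partitions in a box; these are different polynomials, and similarly in types C and D the Eu--Fu polynomials are not the specializations $h_{(k,k)}(1,q,\dots,q^{n-1})$ or $\sum_\ell s_{(k-\ell,\ell)}(1,q^2,\dots)h_{k-2\ell}(1,q^n)$ that the modules produce.

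Several of the supporting claims are also off even for the multidissection analogues. Rotation is not "promotion implemented by a diagonal torus element": in the paper it is modeled by explicit twisted permutation matrices $g^A$, $g^C$ permuting the cluster-monomial basis, and the fixed-point count is a plain trace computation (conjugate to a diagonal matrix, evaluate a Schur or homogeneous symmetric function at roots of unity); Springer's regular element theorem is never needed and does not substitute for constructing the module and verifying the basis permutation. Your worry about part 3 is well placed but understated: even for D-multidissections a purely algebraic proof is only conditional in the paper (it would require the conjectural basis of Conjecture 4.7, since the natural polynomials attached to D-edges are not cluster monomials), and the unconditional argument is a hybrid using a combinatorial folding lemma for even powers of D-rotation and a reduction to the type C result for odd powers, where color-switching kills all diameters. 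As written, your proposal neither constructs the modules nor bridges from multidissections back to the classical dissections of the statement, so it does not yield Theorem 1.1.
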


Specializing Part 1 of Theorem 1.1 to $q = 1$, we get that 
the number of dissections of $\mathbb{P}_n$ with $k$ noncrossing diagonals is
\begin{equation*}
\frac{1}{n+k} {n+k \choose k+1} {n-3 \choose k} = \frac{(n+k-1)!}{k! (k+1)! (n-k-3)! (n-1)(n-2)}.
\end{equation*}
This enumeration was proven first by Cayley \cite{Cayley}.
O'Hara and Zelevinsky noted that the Frame-Robinson-Thrall hook length formula \cite{FRTHook} implies that this expression is also equal to the number of standard Young tableaux of shape 
$(k+1)^2 1^{n-k-3}$.  
When $k = n-3$, the dissections
in question are in fact triangulations 
and the above expression specializes to the Catalan number 
$C_{n-3} = \frac{1}{n-2} {2n - 6 \choose n-3}$.  Eu and Fu proved Parts 1-3
of Theorem 1.1 in the more general context of \emph{s-divisible} polygon
dissections, and hence in the context of the actions of deformed Coxeter
elements on the generalized cluster complexes of Fomin and Reading \cite{FR}.  The main purpose of this paper is to use representation theoretic
methods motivated by the theory of cluster algebras to prove CSPs
which are related to the CSPs in Theorem 1.1. 

More precisely,
given a finite set $X$ acted on by a finite cyclic group $C = \langle c \rangle$
and a polynomial $X(q) \in \mathbb{N}[q]$, there are 
essentially two main methods that have been used to show that the triple
$(X, C, X(q))$ exhibits the CSP.  
On its face, the statement that $(X, C, X(q))$ exhibits the CSP is purely enumerative.
A direct enumerative proof of such a CSP consists of 
counting the fixed point sets $X^{c^d}$ for all $d \geq 0$ and 
showing that these numbers are equal to the polynomial $X(q)$ specialized at appropriate
roots of unity.  This is how Theorem 1.1 was proven in \cite{RSWCSP} and \cite{EuFu}.  
A more algebraic approach dating back to Stembridge \cite{StemTab} in the context of the $q = -1$ 
phenomenon is as follows.  
Suppose we have a $\mathbb{C}$-vector space $V$ with distinguished basis
$\{ e_x \,|\, x \in X \}$ indexed by elements of $X$.  Suppose further that $V$ is acted on by a group
$G$ and that an element $g \in G$ satisfies
\begin{equation*}
g.e_x = e_{c.x}
\end{equation*}
for all $x \in X$.  Then, for any $d \geq 0$, the fixed point set cardinality 
$|X^{c^d}|$ is equal to the character evaluation $\chi(g^d)$, where $\chi: G \rightarrow \mathbb{C}$ 
is the character of $V$.  
It is frequently the case that 
representation theoretic properties of $V$ and/or group theoretic properties 
of $G$ can be used to equate the character
evaluation $\chi(g^d)$ with the specialization of the polynomial $X(q)$ at an appropriate root 
of unity.  
For example, in \cite{RCSP} this method is used to prove that $(X, C, X(q))$ exhibits the CSP, where
$X$ is the set of standard Young tableau of fixed rectangular shape $\lambda \vdash n$, the cyclic
group
$C = \mathbb{Z}_n$ acts on $X$ by jeu-de-taquin promotion, and $X(q) = f^{\lambda}(q)$ is a
 $q$-shift of
the generating function for major index on standard tableaux of shape $\lambda$.  In the 
proof of this result, the module used is the irreducible 
$\mathfrak{S}_n$-module of shape $\lambda$ taken with respect to its Kazhdan-Lusztig 
cellular basis and the group element which models the action of $C$ on $X$ is the 
long cycle $(1,2,\dots,n) \in \mathfrak{S}_n$. 

Since the definition of the CSP is entirely combinatorial, it is appealing to have a direct
enumerative proof of a CSP.
However, in some cases such as the action of jeu-de-taquin promotion on
rectangular standard tableaux above only a
representation theoretic proof is known.  
Moreover, many enumerative proofs of cyclic sieving phenomena involve tricky counting
arguments and/or polynomial evaluations.  Representation theoretic proofs of CSPs can be
more elegant than their enumerative counterparts, as well as give algebraic insight into 
`why' the CSP holds.  It is the purpose of this paper to prove by representation theoretic means
a pair of CSPs $(X, C, X(q))$ 
in Theorems 2.5 and 3.4
involving actions which are closely related to the actions in 
Parts 1 and 2 of Theorem 1.1 (roughly speaking, our sets $X$ will be obtained by allowing edges to occur with
multiplicity and allowing boundary edges to be omitted and our groups $C$ will act by rotation).  
We will also prove by 
a hybrid of algebraic and
enumerative means Theorem 4.6 which is a `multiplicity
counting' version of Part 3 of Theorem 1.1.  One feature of our CSPs in
Theorems 2.5, 3.4, and 4.6 
is that the polynomials $X(q)$ involved will be more representation 
theoretically suggestive than the polynomials appearing in the CSPs
of Theorem 1.1.  In particular, our polynomials $X(q)$ will be (at 
least up to $q$-shift)
the principal specializations of certain symmetric functions arising
as Weyl characters of the modules involved in our proofs.

The representation theory involved in our proofs of Theorems 2.5 and 3.4 is
motivated by the theory of finite type cluster algebras.  Cluster algebras
are a certain class of commutative rings introduced by Fomin and
Zelevinsky \cite{FZClusterI}.  Every cluster algebra comes equipped
with a distinguished generating set of \emph{cluster variables}
which are grouped into finite overlapping sets called \emph{clusters}, all
of which have the same cardinality.  (The common size of these clusters
is called the \emph{rank} of the cluster algebra.)  The cluster algebras
having only finitely many clusters enjoy a classification analogous to
the Cartan-Killing classification of finite real reflection groups
\cite{FZClusterII}.  These cluster algebras are said to be of 
\emph{finite type}.  Any finite type cluster algebra has a linear basis
consisting of \emph{cluster monomials}, i.e., monomials in the cluster
variables drawn from a fixed cluster together with a set of \emph{frozen}
or \emph{coefficient variables} which only depends on the cluster
algebra in question.  

It turns out that the cluster algebras of types ABCD are `naturally
occurring'.  
More precisely, 
in \cite{FZClusterII} Fomin and Zelevinsky endow rings related to
the coordinate ring of the Grassmannian $Gr(2,n)$ of 2-dimensional
subspaces of $\mathbb{C}^n$ with the structure of a cluster algebra 
of types A, B, and D.  Cluster algebras of type C are given a similar
geometric realization in \cite{FZClusterII}.  As finite type cluster
algebras, these rings inherit linear bases of cluster monomials.  
In Sections 2 and 3 we use the geometric realizations of the types A
and C cluster algebras presented in \cite{FZClusterII} to give
representation theoretic proofs of multiplicity counting versions of
Parts 1 and 2 of Theorem 1.1.  In Section 4 we will use the geometric
realization of the type D cluster algebra in \cite{FZClusterII} 
together with some combinatorial reasoning to 
prove a multiplicity counting version of Part 3 of Theorem 1.1.

For the rest of the paper we will use the following notation related
to symmetric functions, following the conventions of
\cite{StanEC2} and \cite{Sag}.  
For $n \geq 0$
a \emph{partition}
$\lambda$ of $n$ is a weakly decreasing sequence of positive integers
$\lambda = (\lambda_1 \geq \lambda_2 \geq \cdots \geq \lambda_{k} > 0)$
such that $\lambda_1 + \cdots + \lambda_{k} = n$.  
The number $k$ is the \emph{length} $\ell(\lambda)$ 
of $\lambda$
and $\lambda$ is said to
have $k$ \emph{parts}.
We write
$\lambda \vdash n$ to mean that $\lambda$ is a partition of $n$.  For
example, we have $(4,2,2) \vdash 8$ and
$\ell((4,2,2)) = 3$.  The \emph{Ferrers diagram} of
a partition $\lambda = (\lambda_1, \dots, \lambda_{k})$ is the 
figure consisting of $k$ left-justified rows of dots with $\lambda_i$
dots in row $i$ for $1 \leq i \leq k$.  

A \emph{$\lambda$-tableau} 
$T$ is an assignment of a positive integer to each dot in the Ferrers
diagram of $\lambda_i$.  
The partition $\lambda$ is the \emph{shape} of the 
$\lambda$-tableau $T$.
A tableau $T$ is called 
\emph{semistandard} if the entries in $T$ increase weakly across rows
and increase strictly down columns.  
A semistandard tableau $T$ of shape $\lambda \vdash n$ 
is called \emph{standard} if each of the letters $1, 2, \dots, n$ occur
exactly once in $T$.
The \emph{content} $\mathrm{cont}(T)$
of a $\lambda$-tableau $T$ is the sequence 
$\mathrm{cont}(T) = (\mathrm{cont}(T)_1, \mathrm{cont}(T)_2, \dots)$,
where 
$\mathrm{cont}(T)_i$ 
is the number of $i's$ in $T$ for all $i$.  Given a 
partition $\lambda$, the \emph{Schur function} 
$s_{\lambda}(x_1, x_2, \dots, x_n)$ 
in $n$ variables
is the polynomial in the variable set $\{ x_1, \dots, x_n \}$ defined by
\begin{equation*}
s_{\lambda}(x_1, \dots, x_n) = \sum_T x_1^{\mathrm{cont}(T)_1} \cdots
x_n^{\mathrm{cont}(T)_n},
\end{equation*}
where the sum ranges over all semistandard tableaux $T$ of shape
$\lambda$ and entries bounded above by $n$.  For 
$k \geq 0$, the \emph{homogeneous symmetric function}
$h_k(x_1, \dots, x_n)$ is given by
$h_k(x_1, \dots, x_n) := \sum_{1 \leq i_1 \leq i_2 \leq \dots \leq i_k \leq n}
x_{i_1} \cdots x_{i_k}$.  We have that 
$h_k(x_1,\dots,x_n) = s_{(k)}(x_1,\dots,x_n)$.  Given a partition 
$\lambda = (\lambda_1, \dots, \lambda_m)$, we 
extend the definition of the homogeneous symmetric functions by
defining
$h_{\lambda}(x_1,\dots,x_n)$ to be the product 
$h_{\lambda_1}(x_1, \dots,x_n) \cdots
h_{\lambda_m}(x_1, \dots, x_n)$.  Given a partition $\lambda$ and
$k \geq 0$, Pieri's Rule states that the product
$h_k(x_1,\dots,x_n) s_{\lambda}(x_1,\dots,x_n)$ is equal to 
$\sum_{\mu} s_{\mu}(x_1,\dots,x_n)$, where $\mu$ ranges over the set of
all partitions obtained by adding $k$ dots to the Ferrers diagram of 
$\lambda$ such that no two dots are added in the same column.

\section{Type A}

Our analog of Part 1 of Theorem 1.1 will involve an action on 
\emph{A-multidissections} of $\mathbb{P}_n$ which are, roughly speaking, 
noncrossing
dissections of $\mathbb{P}_n$ where boundary edges may be omitted and
edges can occur with multiplicity.  More formally, if 
$n > 2$ and
$E_A = { [n] \choose 2 }$
is the set of edges in $\mathbb{P}_n$, an A-multidissection is
a function $f : E_A \rightarrow \mathbb{N}$ such that
whenever $e, e' \in E_A$ are crossing edges, we have that $f(e) = 0$ or 
$f(e') = 0$.  An A-multidissection has $k$ edges if 
$\sum_{e \in E_A} f(e) = k$.
Figure 2.1 shows an A-multidissection of $\mathbb{P}_9$ with six edges.
For $k$ fixed, the set of A-multidissections
of $\mathbb{P}_n$ with $k$ edges carries an action of rotation.  

The simplicial complex $\Delta^A_n$ of A-multidissections of 
$\mathbb{P}_n$ is closely related to the cluster complex 
$\Delta(\Phi_{A_{n-3}})$
of type 
A$_{n-3}$.  In particular, if $\Delta$ is any simplicial complex
on the ground set $V$, let $M(\Delta)$ be the associated multicomplex
whose faces are multisets of the form $\{ v_1^{a_1} , \dots, v_m^{a_m} \}$
where $a_1, \dots, a_m \geq 0$ and $\{ v_1 , \dots, v_m \} \subseteq V$ is 
a face of $\Delta$.
Using slightly nonstandard notation, 
denote by $2^{[n]}$ the $(n-1)$-dimensional simplex of all 
subsets of $[n]$ (this is \emph{not} the vertex set of the $n$-dimensional
hypercube).
Recall that the \emph{join} 
$\Delta \star \Delta'$
of two simplicial complexes
$\Delta$ and $\Delta'$ on the ground sets $V$ and $V'$ is the 
simplicial complex whose ground set is the disjoint union
$V \uplus V'$ and whose faces are disjoint unions $F \uplus F'$ of 
faces $F \in \Delta$ and $F' \in \Delta'$.  
The fact that the $n$ boundary edges of $\mathbb{P}_n$ never occur in
a crossing implies that
the complex $\Delta^A_n$ decomposes as 
a join $\Delta^A_n \cong M(\Delta(\Phi_{A_{n-3}})) \star M(2^{[n]})$.

For $x = (x_{ij})_{1 \leq i \leq n, 1 \leq j \leq 2}$ be an $n \times 2$-matrix
of variables and let $\mathbb{C}[x]$ be the polynomial ring in these 
variables over $\mathbb{C}$.  Let $\mathcal{A}_n$ be the subalgebra
of $\mathbb{C}[x]$ generated by the $2 \times 2$ minors of the matrix
$x$.
For any edge $e = ( i , j ) \in E_A$ with $i < j$, let $z_e^A := \Delta_{ij} \in
\mathcal{A}_n$ be the associated $2 \times 2$ matrix minor.  Given an
A-multidissection $f: E_A \rightarrow \mathbb{N}$, define
$z_f^A \in \mathcal{A}_n$ by
\begin{equation*}
z_f^A := \prod_{e \in E_A} (z_e^A)^{f(e)}.
\end{equation*}
For example, if $f$ is the A-multidissection of $\mathbb{P}_9$ in
Figure 2.1, then 
\begin{equation*}
z_f^A = \Delta_{15} \Delta_{19}^2 \Delta_{35} \Delta_{56} \Delta_{68}.
\end{equation*} 
The ring $\mathcal{A}_n$ is graded by polynomial degree.  Since the 
generating minors of $\mathcal{A}_n$ all have degree 2, we can write
$\mathcal{A}_n \cong \bigoplus_{k \geq 0} V^A(n,k)$, where $V^A(n,k)$ is the 
subspace of $\mathcal{A}_n$ with homogeneous polynomial degree $2k$.  

\begin{thm}
Let $n \geq 3$ and $k \geq 0$.  The set $\{ z_f^A \}$, where $f$ ranges over all A-multidissections
of $\mathbb{P}_n$ with exactly $k$ edges, is a $\mathbb{C}$-basis for the space
$V^A(n,k)$.
\end{thm}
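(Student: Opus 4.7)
The algebra $\mathcal{A}_n$ is classically identified with the Pl\"ucker coordinate ring of the Grassmannian $Gr(2,n)$, so the minors $\Delta_{ij}$ satisfy the three-term Pl\"ucker relation
\begin{equation*}
\Delta_{ik}\Delta_{jl} \;=\; \Delta_{ij}\Delta_{kl} + \Delta_{il}\Delta_{jk} \qquad (i < j < k < l),
\end{equation*}
and this family of quadratic relations generates the full ideal of relations among the $\Delta_{ij}$. The key observation is that the relation expresses a product of two \emph{crossing} minors as a sum of two products of \emph{noncrossing} minors, which is exactly the straightening move needed to rewrite an arbitrary monomial in the minors as a $\mathbb{C}$-linear combination of the noncrossing monomials $z_f^A$.

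For spanning, I would attach to each monomial $m = \prod_e \Delta_e^{f(e)}$ in the generating minors the crossing statistic $c(m) = \sum_{e \neq e',\, e \text{ crosses } e'} f(e) f(e')$. If $c(m) > 0$, pick edges $(i,k)$ and $(j,l)$ with $i<j<k<l$ and both $f((i,k)), f((j,l)) > 0$, factor one copy of $\Delta_{ik}\Delta_{jl}$ out of $m$, and substitute via the Pl\"ucker relation to obtain $m = m_1 + m_2$. The pair contribution $f((i,k)) f((j,l))$ to $c$ drops to $(f((i,k))-1)(f((j,l))-1)$ in each summand, a strict decrease of at least $1$, while a short case analysis on how an arbitrary other edge $e^*$ crosses the six edges $\{(i,k),(j,l),(i,j),(k,l),(i,l),(j,k)\}$ shows that the contribution of $e^*$ to $c$ never increases. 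Hence $c(m_1), c(m_2) < c(m)$, and induction on $c$ expresses every element of $V^A(n,k)$ as a $\mathbb{C}$-linear combination of the $z_f^A$ indexed by A-multidissections.

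For linear independence, the cleanest route is a dimension comparison. It is classical (via Hodge's standard-monomial theory for the Grassmannian, or equivalently via the $GL_n$-decomposition $\mathrm{Sym}^k(\bigwedge^2 \mathbb{C}^n) \cong V^{(k,k)}$) that $\dim V^A(n,k)$ equals the number of semistandard Young tableaux of shape $(k,k)$ with entries in $[n]$. Combined with spanning, it therefore suffices to exhibit a bijection between A-multidissections of $\mathbb{P}_n$ with exactly $k$ edges and such SSYT. I expect this bijection to be the main obstacle: the naive lexicographic listing of the edges of a multidissection fails because nested arcs violate the weak-increase condition along the bottom row, so one must process the arcs in an order adapted to the nesting structure (for instance by iteratively peeling off outermost arcs) or invoke a known bijection between noncrossing arc multisets and two-row tableaux.
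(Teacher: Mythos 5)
Your approach is necessarily different from the paper's, because the paper does not prove Theorem 2.1 at all: it attributes the statement to Kung--Rota, to Pylyavskyy \cite[Theorem 24]{PN}, and to Fomin--Zelevinsky \cite[Example 12.6]{FZClusterII}, where the $z_f^A$ are identified as the cluster monomials of a type $A_{n-3}$ cluster algebra structure on $\mathcal{A}_n$. Your outline is essentially the classical straightening-law proof, and the spanning half is sound: the three-term Pl\"ucker relation $\Delta_{ik}\Delta_{jl}=\Delta_{ij}\Delta_{kl}+\Delta_{il}\Delta_{jk}$ resolves a crossing into the two noncrossing configurations, the new edges share endpoints with (hence do not cross) the old pair, and the case analysis you describe does show that no other edge's crossing contribution increases, so your statistic $c$ strictly drops and induction gives spanning of $V^A(n,k)$ by the $z_f^A$ with $k$ edges. (For this half you do not even need that the Pl\"ucker relations generate the full ideal; the identity alone suffices.)

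Two comments on the independence half. First, the parenthetical justification $\mathrm{Sym}^k(\bigwedge^2\mathbb{C}^n)\cong V^{(k,k)}$ is false for $k\geq 2$: that symmetric power decomposes as a sum over partitions of $2k$ with even column lengths (e.g.\ $V^{(2,2)}\oplus V^{(1,1,1,1)}$ for $k=2$), and $V^{(k,k)}$ is only its quotient by the span of the Pl\"ucker relations. The correct classical input is the one you state first: by Hodge's standard monomial theory (equivalently, the decomposition of the Pl\"ucker coordinate ring of $Gr(2,n)$ as $\bigoplus_k V^{(k,k)}$), $\dim V^A(n,k)$ equals the number of semistandard tableaux of shape $(k,k)$ with entries in $[n]$. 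Note you must take this as an external input rather than appeal to the paper's Lemma 2.3, which is deduced \emph{from} Theorem 2.1 and would make the argument circular. Second, the enumerative step you flag as the ``main obstacle'' --- a bijection between $k$-edge A-multidissections and such semistandard tableaux --- is exactly Pylyavskyy's content-preserving bijection between seminoncrossing and semistandard two-row rectangular tableaux, recorded as Proposition 2.2 in the paper (A-multidissections with $k$ edges correspond to seminoncrossing tableaux of shape $(k,k)$ by sending an edge $(i,j)$, $i<j$, to the column with $i$ over $j$). As written, this step is a genuine gap in a self-contained proof: the naive left-to-right listing indeed fails on nested arcs, and the peeling procedure you gesture at needs to be specified and proved; citing Pylyavskyy closes it. With that citation (or with a direct leading-monomial triangularity argument in place of the dimension count), your proof is complete and is closer in spirit to Kung--Rota's original argument than to the cluster-algebra realization the paper invokes.
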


This result dates back to Kung and Rota \cite{KungRota}.  Later work of Pylyavskyy
\cite[Theorem 24]{PN} 
on noncrossing tableaux
implies this result, as well.  Finally, this result was reproven by Fomin and Zelevinsky
\cite[Example 12.6]{FZClusterII} and vastly generalized in the context of cluster algebras.  
Explicitly, Fomin and Zelevinsky endow the ring $\mathcal{A}_n$
with the structure of a type A$_{n-3}$ cluster algebra such that the set
$\{ z_f^A \}$ of products of minors corresponding to A-multidissections
are the cluster monomials.
The general linear group $GL_n(\mathbb{C})$ acts on the matrix 
$x$ of variables by left multiplication.  This gives 
the graded ring $\mathcal{A}_n$ the structure of a graded
$GL_n(\mathbb{C})$-module.

Theorem 2.1 can be used to determine the isomorphism type of 
the components $V^A(n,k)$ of
the graded module $\mathcal{A}_n$.  We will need the notion due to
Pylyavskyy \cite{PN} of a \emph{seminoncrossing tableau}.  Two intervals
$[a,b]$ and $[c,d]$ in $\mathbb{N}$ 
with $a < b$ and $c < d$
are said to be \emph{noncrossing} if neither $a < c < b < d$ nor 
$c < a < d < b$ hold.  Call a rectangular tableau $T$ with 
exactly two rows 
\emph{seminoncrossing} if it entries increase strictly down columns
and its columns are pairwise noncrossing when viewed as intervals in
$\mathbb{N}$.  We consider two seminoncrossing tableaux to be the 
same if they differ only by a permutation of their columns.

\begin{prop} \cite[Theorem 13]{PN}  For $n \geq 0$ and a 
rectangular
partition
$\lambda$ with 
exactly
two rows, there is a content preserving bijection
between seminoncrossing tableaux of shape $\lambda$ and entries
$\leq n$ and semistandard tableaux of shape $\lambda$ and entries
$\leq n$.
\end{prop}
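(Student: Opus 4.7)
My plan is to construct a bijection $\Phi$ from seminoncrossing tableaux of shape $\lambda = (k,k)$ and entries $\leq n$ to semistandard tableaux of the same shape and entry bound, together with an inverse $\Psi$ built from a stack-based matching. Both maps visibly preserve content because they only rearrange the same multiset of $2k$ entries, so the content-preservation statement in the proposition is automatic once the maps are shown to be mutually inverse.

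Given a seminoncrossing tableau $T$ (viewed as a multiset of strictly increasing columns $(a_p, b_p)$ whose associated intervals are pairwise noncrossing), define $\Phi(T)$ by placing the top-row multiset $A = \{a_p\}$ into the top row in weakly increasing order and the bottom-row multiset $B = \{b_p\}$ into the bottom row in weakly increasing order. The first step is to check that the resulting filling is semistandard, i.e.\ that for the sorted sequences $a'_1 \leq \cdots \leq a'_k$ and $b'_1 \leq \cdots \leq b'_k$ one has $a'_i < b'_i$ for every $i$. This reduces to the CDF comparison $|\{p : a_p \leq t-1\}| \geq |\{p : b_p \leq t\}|$ for every integer $t$, which follows from the per-column strictness $a_p < b_p$ alone; the noncrossing hypothesis plays no role here and will only be needed for injectivity of $\Phi$.

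To define $\Psi$, I take a semistandard tableau $S$ of shape $(k,k)$ and read its $2k$ entries in increasing order, tagging each entry as ``top'' or ``bottom'' and breaking ties by processing bottom-tagged copies strictly before top-tagged copies. I maintain a stack: push each top-tagged value, and whenever a bottom-tagged value $v$ is read, pop the current top of the stack, call it $a^{\star}$, and emit the pair $(a^{\star}, v)$. Writing $U_v$ and $L_v$ for the numbers of top-row and bottom-row entries $\leq v$ in $S$, the semistandard condition on $S$ is equivalent to $U_{v-1} \geq L_v$ for every $v \geq 1$, which is exactly the input needed to guarantee that the stack is nonempty at each pop and that the emitted $a^{\star}$ strictly precedes $v$. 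By the standard life-cycle argument for a LIFO stack, any two pairs produced are either nested or disjoint, so $\Psi(S)$ is a valid seminoncrossing tableau.

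The main step is that $\Phi$ and $\Psi$ are mutually inverse. Since both maps preserve the row-multisets $(A, B)$, the identity $\Phi \circ \Psi = \mathrm{id}$ is immediate. For $\Psi \circ \Phi = \mathrm{id}$ I would prove the key lemma that, given multisets $A$ and $B$ arising from a semistandard tableau of shape $(k,k)$, there is a \emph{unique} multiset of noncrossing pairs whose top-multiset is $A$ and whose bottom-multiset is $B$. I would argue by induction on $k$: in any noncrossing pairing, the smallest element $v$ of $B$ must be paired with the largest element of $A$ strictly less than $v$, because otherwise an intermediate top $a^{\star}$ would be left to pair with a later (strictly larger) bottom $b^{\star}$, producing the crossing $a < a^{\star} < v < b^{\star}$. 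This pins down one column unambiguously, and the inductive hypothesis finishes. Since the stack algorithm executes precisely this canonical choice, $\Psi$ recovers the seminoncrossing tableau $T$ that $\Phi$ started from. I expect this uniqueness of noncrossing matchings with prescribed top- and bottom-multisets to be the main (though mild) obstacle; the well-definedness of $\Phi$ and $\Psi$ is routine bookkeeping on sorted sequences and stack operations.
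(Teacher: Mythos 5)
Your proof is correct: $\Phi$ (sort the two row multisets) is well defined by the CDF argument you give, the bottom-before-top tie-breaking in the stack algorithm is exactly what is needed to keep columns strict, the LIFO life-cycle argument does yield noncrossing pairs, and your uniqueness lemma (smallest bottom must pair with the largest strictly smaller top, else a crossing $a<a^{\star}<v<b^{\star}$ is forced) correctly handles multiplicities and closes the loop $\Psi\circ\Phi=\mathrm{id}$. The paper does not reprove the statement; it cites Pylyavskyy's Theorem 13 and sketches his route, which standardizes and uses the fact that every even-length Yamanouchi word in the letters $1,2$ corresponds to a unique standard tableau and a unique seminoncrossing tableau of content $(1,\dots,1)$. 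Your argument is essentially the content-level version of the same idea: reading your push/pop sequence in value order (with your tie-breaking) is precisely the Yamanouchi word, and your uniqueness-of-noncrossing-matchings lemma is the ``unique seminoncrossing tableau per lattice word'' statement, proved directly on multisets rather than after standardization. What your packaging buys is a self-contained, explicitly invertible bijection for the two-row rectangular case with no detour through standardization; what the Yamanouchi-word formulation buys is the flexibility that lets Pylyavskyy extend the result to the more general (non-rectangular, more than two rows) seminoncrossing tableaux that the paper alludes to after the proposition.
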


Pylyavskyy gives a more general definition of a seminoncrossing tableau
which applies to general shapes with more than two rows under
which this result still holds, but we 
will not need this here.  In both this special case and the more
general context, the proof of this result is combinatorial and uses the 
fact that every even length Yamanouchi word
\footnote{A \emph{Yamanouchi word} is a finite sequence
$w_1 \dots w_n$ of positive integers such that for all 
$i > 0$ and all $1 \leq j \leq n$, the number of $i$'s
in the prefix $w_1 \dots w_j$ is greater than or equal to the number
of $(i+1)$'s in the prefix $w_1 \dots w_j$.} 
on the letters $1$ and $2$ 
corresponds to a unique 
standard tableau and a unique seminoncrossing tableau of content
$(1,\dots,1)$.  
Since the condition for a tableau to be semistandard can be phrased 
as a nonnesting condition on its columns, Proposition 2.2 can be viewed as 
an instance of combinatorial `duality' between noncrossing
and nonnesting objects.

\begin{lem}
Let $n \geq 3$ and $k \geq 0$.  The 
graded component
$V^A(n,k)$ is isomorphic as a $GL_n(\mathbb{C})$-module to the 
irreducible polynomial
representation of $GL_n(\mathbb{C})$ of highest weight $(k,k)$.
\end{lem}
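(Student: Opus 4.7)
The plan is to identify $V^A(n,k)$ by computing its character as a representation of the diagonal torus $T \subset GL_n(\mathbb{C})$ and showing this character equals the Schur polynomial $s_{(k,k)}(t_1,\dots,t_n)$. Since polynomial $GL_n(\mathbb{C})$-representations are determined by their $T$-characters, and $s_{(k,k)}$ is the irreducible character of highest weight $(k,k)$, this will finish the proof.

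First I would verify that $V^A(n,k)$ really is a $GL_n(\mathbb{C})$-subrepresentation of $\mathbb{C}[x]$: left multiplication on $x$ makes the linear span of the $2\times 2$ minors $\{\Delta_{ij}\}$ isomorphic to $\wedge^2 \mathbb{C}^n$, and the subalgebra generated by a $GL_n(\mathbb{C})$-submodule is automatically $GL_n(\mathbb{C})$-stable. Since the action preserves polynomial degree, each graded piece $V^A(n,k)$ is a finite-dimensional polynomial $GL_n(\mathbb{C})$-subrepresentation of $\mathbb{C}[x]$.

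Next I would compute the $T$-character of $V^A(n,k)$ using the basis from Theorem 2.1. A diagonal matrix $t = \mathrm{diag}(t_1,\dots,t_n)$ scales row $i$ of $x$ by $t_i$, so each minor $\Delta_{ij}$ is a $T$-eigenvector of weight $t_i t_j$. Consequently the basis element $z_f^A = \prod_e (z_e^A)^{f(e)}$ is a $T$-eigenvector of weight $\prod_{i=1}^n t_i^{d_i(f)}$, where $d_i(f) := \sum_{e \ni i} f(e)$. Hence the character of $V^A(n,k)$ is $\sum_f \prod_i t_i^{d_i(f)}$, the sum running over A-multidissections $f$ of $\mathbb{P}_n$ with $k$ edges.

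Then I would match this generating function to $s_{(k,k)}(t_1,\dots,t_n)$ via Pylyavskyy's seminoncrossing tableaux. To an A-multidissection $f$ I attach a two-row tableau of shape $(k,k)$ by recording, for each edge $\{i,j\}$ with $i < j$, exactly $f(\{i,j\})$ columns with $i$ on top of $j$. The noncrossing condition on chords of the convex polygon $\mathbb{P}_n$ coincides with the combinatorial noncrossing condition on the corresponding integer intervals $[i,j]$, so this yields a well-defined seminoncrossing tableau (up to permutation of columns), and the content of this tableau is precisely $(d_1(f),\dots,d_n(f))$. This gives a content-preserving bijection between A-multidissections with $k$ edges and seminoncrossing tableaux of shape $(k,k)$ with entries in $[n]$. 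By Proposition 2.2, the character becomes $\sum_T t^{\mathrm{cont}(T)}$ over semistandard tableaux $T$ of shape $(k,k)$ with entries $\leq n$, which is $s_{(k,k)}(t_1,\dots,t_n)$ by definition.

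The main obstacle is really just the content-preserving translation from multidissections to seminoncrossing tableaux and the verification that geometric noncrossing of chords matches noncrossing of intervals; both are routine. Once this translation is in place, Theorem 2.1 and Proposition 2.2 combine to identify $V^A(n,k)$ with the irreducible polynomial representation of highest weight $(k,k)$.
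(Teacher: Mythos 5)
Your proposal is correct and follows essentially the same route as the paper: compute the torus character of $V^A(n,k)$ using the basis $\{z_f^A\}$ from Theorem 2.1, translate A-multidissections into seminoncrossing tableaux of shape $(k,k)$ column-by-column, and invoke Proposition 2.2 to identify the character with $s_{(k,k)}$. The only addition is your explicit check that $V^A(n,k)$ is a $GL_n(\mathbb{C})$-submodule, which the paper takes for granted.
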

\begin{proof}
To prove the claimed module isomorphism, we compute the Weyl character
of $V^A(n,k)$.  Let 
$h = \mathrm{diag}(y_1, \dots, y_n) \in GL_n(\mathbb{C})$ be an element
of the Cartan subgroup of diagonal matrices.  For any
A-multidissection $f$ of $\mathbb{P}_n$, the polynomial $z_f^A$ is an eigenvector for the action of 
$h$ on $V^A(n,k)$ with eigenvalue equal to 
\begin{equation*}
\prod_{ij \in {[n] \choose 2}} (y_i y_j)^{f(ij)}.
\end{equation*}
Summing over A-multidissections $f$
and applying Theorem 2.1, the trace of the action of $h$ on $V^A(n,k)$ is equal to
\begin{equation*}
\sum_f \prod_{ij \in {[n] \choose 2}} (y_i y_j)^{f(ij)}.
\end{equation*}
There is an obvious bijection between the set of A-multidissections $f$ of $\mathbb{P}_n$ 
with $k$ edges 
and 
seminoncrossing tableaux 
of shape $(k,k)$ and entries bounded above by $n$ obtained by 
letting the edge $(i,j)$ with $i < j$ correspond to the length two column containing $i$ above
$j$.  
For example, the A-multidissection in Figure 2.1 is mapped to the seminoncrossing 
tableau:
$\begin{array}{cccccc}
1 & 1 & 1 & 3 & 5 & 6 \\
5 & 9 & 9 & 5 & 6 & 8
\end{array}$.
(When drawing A-multidissections, edges are drawn with the multiplicity
given by the multidissection and dashed boundary edges indicate
boundary edges which are included with multiplicity zero.)

\begin{figure}
\centering
\includegraphics[scale=.6]{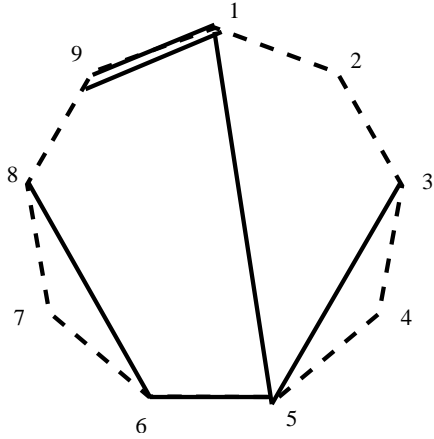}
\caption{An A-multidissection of $\mathbb{P}_9$ with six edges}
\end{figure}

Since this bijection preserves weights,
Proposition 2.2 implies that the above expression is equal to the Schur function
\begin{equation*}
s_{(k,k)}(y_1, \dots, y_n),
\end{equation*}
which proves the desired module isomorphism.  
\end{proof}

Define $g^A$ to be the element of $GL_n(\mathbb{C})$ given by
\begin{equation*}
g^A = \begin{pmatrix}
0 & 0 & 0 & \dots & 0 & -1 \\
1 & 0 & 0 & \dots & 0 & 0 \\
0 & 1 & 0 & \dots & 0 & 0 \\
0 & 0 & 1 & \dots & 0 & 0 \\
   &     &    & \dots &    &     \\
0 & 0 & 0 & \dots & 1 & 0
\end{pmatrix}.
\end{equation*}
Thus, $g^A$ is the permutation matrix for the long cycle in $\mathfrak{S}_n$ with the upper
right $1$ replaced with a $-1$.  The following observation is a simple computation.

\begin{obs}
Let $r$ denote the rotation operator and let $f$ by any A-multidissection of $\mathbb{P}_n$.  Then,
\begin{equation*}
g^A . z_f^A = z_{r.f}^A.
\end{equation*}
\end{obs}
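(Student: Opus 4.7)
The plan is to reduce Observation 2.4 to a single-minor computation. Since $\mathcal{A}_n \subseteq \mathbb{C}[x]$ and the $GL_n(\mathbb{C})$-action is by algebra automorphisms, $g^A$ acts multiplicatively on the monomial $z_f^A = \prod_e (z_e^A)^{f(e)}$. It therefore suffices to prove that $g^A \cdot \Delta_{ij} = \Delta_{r.(i,j)}$ for every edge $(i,j)$ with $1 \leq i < j \leq n$, after which the full statement follows by taking products over edges in the support of $f$.

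To perform this single-edge check, I would first read off how left multiplication by $g^A$ transforms the rows of the matrix $x$: for $i \geq 2$, row $i$ of $g^A x$ equals row $i-1$ of $x$, while row $1$ of $g^A x$ equals the negative of row $n$ of $x$. Consequently $g^A$ acts on any minor $\Delta_{ij}$ by substituting the appropriate rows into the $2 \times 2$ determinant.

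The case analysis then has two branches. For $(i,j)$ with $i \geq 2$, neither of the two relevant rows of $g^A x$ is sign-flipped, so $g^A \cdot \Delta_{ij} = \Delta_{i-1,\,j-1}$, which is the minor of the rotated edge. For $(1, j)$ with $j \geq 2$, the first row of $g^A x$ is the negative of row $n$ of $x$ while the $j$th row is row $j-1$ of $x$; the resulting determinant picks up a factor $-1$ from the negation, but the row indices $n$ and $j-1$ appear in decreasing order (since $j - 1 < n$), so swapping them to fit our convention $\Delta_{a,b}$ with $a < b$ introduces a second $-1$. The two minus signs cancel and we obtain $\Delta_{j-1,\,n}$, which is the minor of the rotated image of $(1, j)$.

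The only delicate point, and the reason the upper-right entry of $g^A$ is $-1$ rather than $+1$, is precisely this sign cancellation: with the ordinary permutation matrix of the long cycle, each edge of $f$ incident to vertex $n$ would contribute an extraneous $-1$, and the identity $g^A \cdot z_f^A = z_{r.f}^A$ would fail by an overall sign depending on the parity of the number of such edges. The tailored $-1$ in $g^A$ is engineered precisely to absorb the sign introduced when reordering rows of the $2 \times 2$ determinant, and beyond this bookkeeping no further obstacles arise.
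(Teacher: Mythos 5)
Your proof is correct and is exactly the ``simple computation'' the paper alludes to without writing out: reduce to the action of $g^A$ on a single minor $\Delta_{ij}$ via row substitution, and note that the $-1$ in the corner cancels the sign from reordering rows in the wrap-around case (which edges carry the wrap-around sign depends only on the chosen rotation/action convention, and your bookkeeping is consistent). Nothing further is needed.
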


\begin{thm}
Fix $n \geq 3$ and $k \geq 0$ and let $X$ be the set of A-multidissections of $\mathbb{P}_n$ with 
$k$ edges.  Let the cyclic group $C = \mathbb{Z}_n$ of order $n$ act on $X$ by rotation.  The triple
$(X, C, X(q))$ exhibits the cyclic sieving phenomenon, where
\begin{equation*}
X(q) = q^{-k} s_{(k,k)}(1,q,\dots,q^{n-1}).
\end{equation*}
\end{thm}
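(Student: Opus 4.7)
The plan is to apply the standard representation-theoretic approach to the CSP via the $GL_n(\mathbb{C})$-module $V^A(n,k)$. Combining Theorem 2.1 with Observation 2.4 shows that $g^A$ acts on the basis $\{z_f^A\}_{f \in X}$ of $V^A(n,k)$ as a (signless) permutation matrix for the rotation action of $C$, so
\begin{equation*}
|X^{r^d}| \;=\; \mathrm{tr}\bigl((g^A)^d \bigm|_{V^A(n,k)}\bigr)
\end{equation*}
for every $d \ge 0$. The task is then to evaluate this trace via Weyl character theory and match it with $X(\omega^d)$, where $\omega = e^{2\pi i/n}$.

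To evaluate the trace I would diagonalize $g^A$. Reading off its last column shows that $g^A$ is the companion matrix of $t^n + 1$, so its eigenvalues are the $n$-th roots of $-1$, namely $\xi, \xi\omega, \dots, \xi\omega^{n-1}$, for any fixed square root $\xi$ of $\omega$. By Lemma 2.3, $V^A(n,k)$ is the irreducible $GL_n(\mathbb{C})$-module of highest weight $(k,k)$, so its character at $\mathrm{diag}(y_1,\dots,y_n)$ is $s_{(k,k)}(y_1,\dots,y_n)$. Since $s_{(k,k)}$ is homogeneous of degree $2k$ and $\xi^{2} = \omega$, this yields
\begin{equation*}
\mathrm{tr}\bigl((g^A)^d \bigm|_{V^A(n,k)}\bigr) \;=\; s_{(k,k)}(\xi^d, \xi^d\omega^d, \dots, \xi^d\omega^{d(n-1)}) \;=\; \omega^{dk}\, s_{(k,k)}(1,\omega^d,\dots,\omega^{d(n-1)}).
\end{equation*}

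The target value $X(\omega^d) = \omega^{-dk}\, s_{(k,k)}(1,\omega^d,\dots,\omega^{d(n-1)})$ differs from the trace above by a factor of $\omega^{2dk}$, and reconciling this sign discrepancy is the main (though ultimately mild) obstacle. I would handle it via a reality argument: the multiset $\{1,\omega^d,\dots,\omega^{d(n-1)}\}$ is closed under complex conjugation, so $s_{(k,k)}(1,\omega^d,\dots,\omega^{d(n-1)}) \in \mathbb{R}$. If this value vanishes, both $|X^{r^d}|$ and $X(\omega^d)$ are $0$. Otherwise, the equality $|X^{r^d}| = \omega^{dk}\, s_{(k,k)}(1,\omega^d,\dots,\omega^{d(n-1)})$ together with the fact that the left side is a real nonnegative integer forces $\omega^{dk} \in \mathbb{R}$, hence $\omega^{2dk} = 1$, so that $\omega^{dk} = \omega^{-dk}$ and $X(\omega^d) = |X^{r^d}|$. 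In either case the CSP follows.
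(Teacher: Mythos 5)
Your proposal is correct and takes essentially the same route as the paper: identify $|X^{r^d}|$ with the trace of $(g^A)^d$ on $V^A(n,k)$ via Theorem 2.1 and Observation 2.4, diagonalize $g^A$ (eigenvalues the $n$-th roots of $-1$), and evaluate the trace as a specialization of $s_{(k,k)}$ using Lemma 2.3. The sign issue you resolve by a reality argument is handled in the paper simply by writing the eigenvalue multiset of $(g^A)^d$ as $\alpha^{-d}\mathrm{diag}(1,\zeta^d,\dots,\zeta^{(n-1)d})$ with $\alpha = e^{\pi i/n}$ (the same multiset as yours, since $\alpha^{-d}\zeta^{jd}=\alpha^{d}\zeta^{(j-1)d}$), which yields the scalar $\zeta^{-dk}$ and hence $X(\zeta^d)$ directly; also note that your phrase ``any fixed square root $\xi$ of $\omega$'' should be the choice $\xi=e^{\pi i/n}$ when $n$ is odd, though the resulting formula is unaffected.
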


\begin{proof}
Fix $d \geq 0$ and let $\zeta = e^{\frac{2 \pi i}{n}}$.  Let $r: X \rightarrow X$ be the 
rotation operator.
By Theorem 2.1, we know that the set $\{ z_f^A \,|\, f \in X \}$ forms a basis for
$V^A(n,k)$.  Observation 2.4 implies that the fixed point set cardinality 
$|X^{r^d}|$ is equal to the trace of $(g^A)^d$ on $V^A(n,k)$. 

On the other hand, we have that $(g^A)^d$ is $GL_n(\mathbb{C})$-conjugate to
the diagonal matrix 
$\alpha^{-d} \mathrm{diag}(1, \zeta^d, \dots, \zeta^{(n-1)d})$, where
$\alpha = e^{\frac{\pi i}{n}}$.  So, by Lemma 2.3, the trace of $(g^A)^d$ on
$V^A(n,k)$ is the Schur function specialization
$\alpha^{-2dk} s_{(k,k)}(1,\zeta^d, \dots, \zeta^{(n-1)d}) = X(\zeta^d)$,
as desired. 
\end{proof}

Fixing $n$ and a divisor $d | n$, the set of A-multidissections
of $\mathbb{P}_n$ which are fixed by the $d$-th power of rotation is closely
related to the set of classical dissections of $\mathbb{P}_n$ which
are fixed by the $d$-th power of rotation.  
An enumerative sieve argument implies that the collection of fixed point
set cardinalities given by Part 1 of Theorem 1.1 is obtainable from the
set of fixed point set cardinalities given in Theorem 2.5, and vice versa.

The polynomial $X(q)$ appearing in Theorem 2.5 has a nice product
formula given by Stanley's $q$-hook content formula \cite{StanEC2}.  
$X(q)$ is also the generating function for plane partitions inside 
a $2 \times k \times (n-2)$-box.  The polynomial $X(q)$ appears in
a CSP involving the action of promotion on semistandard tableaux of
shape $(k,k)$ and entries bounded above by $n$ \cite{RCSP}.  
The
$q$-shift $q^k X(q) = [s_{(k,k)}(x_1, \dots, x_n)]_{x_i = q^{i-1}}$ is
called the \emph{principal specialization} of the Schur function
$s_{(k,k)}(x_1, \dots, x_n)$.  Finally, the polynomial $X(q)$ can be 
interpreted 
(up to $q$-shift)
as the $q$-Weyl dimension formula for the irreducible
polynomial representation of $GL_n(\mathbb{C})$ of highest weight
$(k,k)$.

\section{Type B/C}

While the actions of the 
deformed Coxeter element $\tau$ on the cluster complexes
of types B and C are identical, the geometric realizations of the cluster
algebras of types B and C given in \cite{FZClusterII} are quite different.  
In proving our multiplicity counting analog of Part 2 of Theorem 1.1 we
will use the geometric realization of the
type C cluster algebra in \cite{FZClusterII}.

For $n \geq 2$, a \emph{C-edge} in $\mathbb{P}_{2n}$ is either a pair of
centrally symmetric nondiameter edges (which may be on the boundary of
$\mathbb{P}_{2n}$) or a diameter of $\mathbb{P}_{2n}$.  A
\emph{C-multidissection} of $\mathbb{P}_{2n}$
is a function $f: E_C \rightarrow \mathbb{N}$, 
where $E_C$ is the set of C-edges in $\mathbb{P}_{2n}$, such that for any pair
$e, e'$ of crossing C-edges we have $f(e) = 0$ or $f(e') = 0$.  As in the
type A case, 
the fact that the $n$ pairs of centrally symmetric boundary edges of 
$\mathbb{P}_{2n}$ never occur in a crossing implies that
the simplicial complex $\Delta^C_n$ formed by the 
C-multidissections of $\mathbb{P}_{2n}$ is related to the type C$_{n-1}$ cluster
complex $\Delta(\Phi_{C_{n-1}})$ via
$\Delta^C_n \cong M(\Delta(\Phi_{C_{n-1}})) \star M(2^{[n]})$.  A 
C-multidissection $f$ has $k$ edges if $\sum_{e \in E_C} f(e) = k$.
Figure 3.1 shows a C-multidissection of $\mathbb{P}_8$ with five edges.
Rotation acts with order $n$ on the set of C-multidissections of 
$\mathbb{P}_{2n}$ with $k$ edges.

\begin{figure}
\centering
\includegraphics[scale=.6]{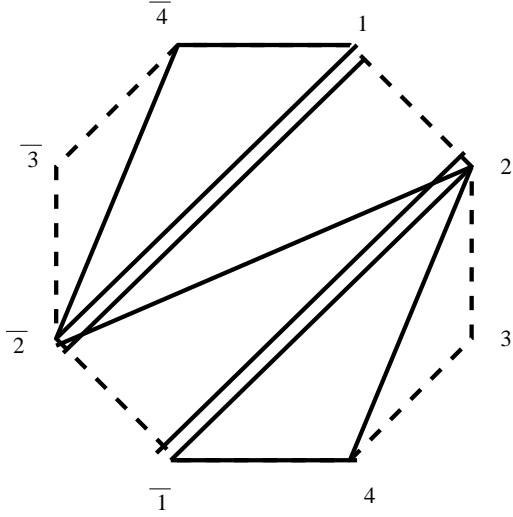}
\caption{A C-multidissection of $\mathbb{P}_8$ with five edges}
\end{figure}

We recall the geometric realization of the type C$_{n-1}$ cluster algebra
given in \cite{FZClusterII}.  Fix $n \geq 2$, let 
$x = (x_{ij})_{1 \leq i \leq n, 1 \leq j \leq 2}$ be an $n \times 2$ matrix
of variables, and let $\mathbb{C}[x]$ be the polynomial ring in these
variables over $\mathbb{C}$.  The multiplicative group $\mathbb{C}^{\times}$
of nonzero complex numbers acts on $\mathbb{C}[x]$ by
$\alpha . x_{i1} = \alpha x_{i1}$ and
$\alpha . x_{i2} = \alpha^{-1} x_{i2}$ for $1 \leq i \leq n$.  Let
$\mathbb{C}[x]^{\mathbb{C}^{\times}}$ be the invariant subalgebra for 
this action.  Since no polynomial in $\mathbb{C}[x]$ containing a nonzero
homogeneous component of odd degree is fixed by $\mathbb{C}^{\times}$, we 
have the grading $\mathbb{C}[x]^{\mathbb{C}^{\times}} \cong \bigoplus_{k \geq 0}
V^C(n,k)$, where $V^C(n,k)$ is the subspace of 
$\mathbb{C}[x]^{\mathbb{C}^{\times}}$ spanned by polynomials of homogeneous
degree $2k$.

We construct a graded action of $GL_n(\mathbb{C})$ on 
$\mathbb{C}[x]^{\mathbb{C}^{\times}}$  as follows.  Let 
$\sigma: GL_n(\mathbb{C}) \rightarrow GL_n(\mathbb{C})$ be the
involution defined by $\sigma(A) = (A^{-1})^T$.  The group 
$GL_n(\mathbb{C})$ acts on the polynomial algebra
$\mathbb{C}[x_{11}, \dots, x_{n1}]$ by considering this algebra as 
the symmetric algebra over the defining representation of 
$GL_n(\mathbb{C})$.  In addition, the group $GL_n(\mathbb{C})$ acts 
on the algebra $\mathbb{C}[x_{12}, \dots, x_{n2}]$ via the above 
action precomposed with $\sigma$.  These actions tensor together to 
give an action of $GL_n(\mathbb{C})$ on 
$\mathbb{C}[x] \cong \mathbb{C}[x_{11}, \dots, x_{n1}] \otimes_{\mathbb{C}}
\mathbb{C}[x_{12}, \dots, x_{n2}]$.  Restriction of this action to the
center $Z(GL_n(\mathbb{C})) \cong \mathbb{C}^{\times}$ of nonzero
multiples of the identity matrix yields the action of $\mathbb{C}^{\times}$
in the above paragraph.  Therefore, the invariant space
$\mathbb{C}[x]^{\mathbb{C}^{\times}}$ is a graded $GL_n(\mathbb{C})$-module.
As a $GL_n(\mathbb{C})$-module, we
have that 
$\mathbb{C}[x] \cong Sym(\mathbb{C}^n) \otimes_{\mathbb{C}} 
Sym((\mathbb{C}^n)^*)$, where $\mathbb{C}^n$ carries the defining 
representation of $GL_n(\mathbb{C})$ and $(\mathbb{C}^n)^*$ carries
the dual of the defining representation of $GL_n(\mathbb{C})$.  The 
invariant subalgebra $\mathbb{C}[x]^{\mathbb{C}^{\times}}$ 
is the Serge subalgebra
of $\mathbb{C}[x]$ consisting of polynomials which are bihomogeneous
with degrees of the form $(k,k)$.  Therefore, we have the  
decomposition of $GL_n(\mathbb{C})$-modules:
\begin{equation*}
\mathbb{C}[x]^{\mathbb{C}^{\times}} = \bigoplus_{k \geq 0} 
Sym^k(\mathbb{C}^n) \otimes_{\mathbb{C}} Sym^k((\mathbb{C}^n)^*).
\end{equation*}
The next result follows from looking at the $k$-th graded
piece of the above direct sum.

\begin{lem}
Let $n \geq 2$ and $k \geq 0$.  
Let $\mathbb{C}^n$ carry the defining representation of $GL_n(\mathbb{C})$ and let 
$(\mathbb{C}^n)^{*}$ be the dual of this representation.
The $GL_n(\mathbb{C})$-module $V^C(n,k)$ is isomorphic to the rational
$GL_n(\mathbb{C})$-module 
$Sym^k(\mathbb{C}^n) \otimes_{\mathbb{C}} Sym^k((\mathbb{C}^n)^{*})$.
\end{lem}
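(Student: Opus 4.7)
The proof is essentially a bookkeeping exercise: one just has to extract the $k$-th graded piece of the $GL_n(\mathbb{C})$-module decomposition $\mathbb{C}[x]^{\mathbb{C}^{\times}} \cong \bigoplus_{k \geq 0} Sym^k(\mathbb{C}^n) \otimes_{\mathbb{C}} Sym^k((\mathbb{C}^n)^*)$ already established in the paragraph preceding the lemma. The plan is to identify $V^C(n,k)$ as a specific bihomogeneous component of $\mathbb{C}[x]$, then match it to the symmetric algebra tensor factors.

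First, I would refine the grading on $\mathbb{C}[x]$. The ring $\mathbb{C}[x]$ is bigraded by the pair $(a,b)$, where $a$ is the degree in the variables $x_{11},\dots,x_{n1}$ and $b$ is the degree in $x_{12},\dots,x_{n2}$. Under the identification $\mathbb{C}[x_{11},\dots,x_{n1}] \cong Sym(\mathbb{C}^n)$ and $\mathbb{C}[x_{12},\dots,x_{n2}] \cong Sym((\mathbb{C}^n)^*)$, the bidegree $(a,b)$ piece of $\mathbb{C}[x]$ is isomorphic to $Sym^a(\mathbb{C}^n) \otimes_{\mathbb{C}} Sym^b((\mathbb{C}^n)^*)$ as a $GL_n(\mathbb{C})$-module.

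Next, I would determine which bihomogeneous components are fixed by $\mathbb{C}^{\times}$. An element $\alpha \in \mathbb{C}^{\times}$ acts on a bihomogeneous polynomial of bidegree $(a,b)$ by the scalar $\alpha^{a - b}$, so the $\mathbb{C}^{\times}$-invariants consist exactly of the polynomials whose bidegree satisfies $a = b$. Thus
\begin{equation*}
\mathbb{C}[x]^{\mathbb{C}^{\times}} = \bigoplus_{k \geq 0} \mathbb{C}[x]_{(k,k)} \cong \bigoplus_{k \geq 0} Sym^k(\mathbb{C}^n) \otimes_{\mathbb{C}} Sym^k((\mathbb{C}^n)^*),
\end{equation*}
and the total polynomial degree of $\mathbb{C}[x]_{(k,k)}$ is $2k$. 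By definition $V^C(n,k)$ is the subspace of $\mathbb{C}[x]^{\mathbb{C}^{\times}}$ of polynomial degree $2k$, which is exactly the $k$-th summand on the right, proving the asserted isomorphism.

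There is no real obstacle here; the only thing to double-check is that the $GL_n(\mathbb{C})$-action preserves each bigraded piece so that the isomorphism is one of $GL_n(\mathbb{C})$-modules rather than merely vector spaces. This is immediate because the constructed action was obtained by tensoring together representations on $\mathbb{C}[x_{11},\dots,x_{n1}]$ and $\mathbb{C}[x_{12},\dots,x_{n2}]$ separately, each of which preserves the corresponding single-column grading.
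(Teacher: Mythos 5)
Your proposal is correct and follows the same route as the paper: the paper establishes the decomposition $\mathbb{C}[x]^{\mathbb{C}^{\times}} \cong \bigoplus_{k \geq 0} Sym^k(\mathbb{C}^n) \otimes_{\mathbb{C}} Sym^k((\mathbb{C}^n)^*)$ in the preceding paragraph (identifying the invariants as the bihomogeneous pieces of bidegree $(k,k)$, i.e.\ the Segre subalgebra) and then simply reads off the $k$-th graded piece, which is exactly the bookkeeping you carry out. Your extra check that the bigrading is $GL_n(\mathbb{C})$-stable is a harmless elaboration of the same argument.
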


As in \cite[Example 12.12]{FZClusterII}, C-multidissections can be 
used to build
$\mathbb{C}$-bases for the spaces $V^C(n,k)$.  Label the vertices 
of $\mathbb{P}_{2n}$ clockwise with $1, 2, \dots, n, \bar{1}, \bar{2}, \dots,
\bar{n}$.  For any C-edge $e \in E_C$ we associate a polynomial
$z_e^C \in V^C(n,1)$ as follows.  If $e$ is a diameter of the form
$a \bar{a}$ for $1 \leq a \leq n$, define $z_e^C = x_{a1} x_{a2}$.  If $e$
is a pair of `integrated' centrally symmetric nondiameter edges of the form
$a \bar{b}, \bar{a} b$ for $1 \leq a < b \leq n$, define
$z_e^C = \frac{x_{a1} x_{b2} + x_{a2} x_{b1}}{2}$.  Finally, if 
$e$ is a pair of `segregated' centrally symmetric nondiameter edges 
of the form $ab, \bar{a}\bar{b}$ for $1 \leq a < b \leq n$, define
$z_e^C = \frac{x_{a1}x_{b2} - x_{a2} x_{b1}}{2i}$.  If 
$f: E_C \rightarrow \mathbb{N}$ is any C-multidissection of
$\mathbb{P}_{2n}$, define $z_f^C$ by
\begin{equation*}
z_f^C := \prod_{e \in E_C} (z_e^C)^{f(e)}.
\end{equation*}
For example, if $f$ is the C-multidissection of $\mathbb{P}_8$ shown in
Figure 3.1, then
\begin{equation*}
z_f^C = (\frac{x_{11}x_{22} + x_{12}x_{21}}{2})^2(\frac{x_{11}x_{24}+x_{14}
x_{21}}{2})(\frac{x_{12}x_{24}-x_{14}x_{22}}{2i})(x_{21}x_{22}) \in
V^C(4,5).
\end{equation*}
The type C analog of Theorem 2.1 is as follows.

\begin{thm}\cite[Proposition 12.13]{FZClusterII}
Let $n \geq 2$ and $k \geq 0$.  Then, the set $\{ z^C_f \}$, where $f$ ranges over all 
C-multidissections of $\mathbb{P}_{2n}$ with exactly $k$ edges, forms a 
$\mathbb{C}$-basis for $V^C(n,k)$.
\end{thm}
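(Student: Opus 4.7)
The plan is to establish the basis claim by combining Lemma 3.1 with a straightening argument and a matching enumeration. Lemma 3.1 identifies $V^C(n,k)$ with $\mathrm{Sym}^k(\mathbb{C}^n) \otimes_{\mathbb{C}} \mathrm{Sym}^k((\mathbb{C}^n)^*)$, so $\dim_{\mathbb{C}} V^C(n,k) = \binom{n+k-1}{k}^2$. Each $z_e^C$ is a bidegree $(1,1)$ polynomial invariant under $\mathbb{C}^\times$, so every product $z_f^C = \prod_e (z_e^C)^{f(e)}$ indexed by a C-multidissection $f$ with $k$ edges lies in $V^C(n,k)$.

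First I would handle the base case $k=1$ by direct inspection. The $n$ diameter polynomials $x_{a1}x_{a2}$, together with the $\binom{n}{2}$ integrated generators $(x_{a1}x_{b2}+x_{a2}x_{b1})/2$ and the $\binom{n}{2}$ segregated generators $(x_{a1}x_{b2}-x_{a2}x_{b1})/(2i)$, give $n^2$ bidegree $(1,1)$ polynomials. At each nondiameter pair $\{a,b\}$ the two generators are related to the monomials $x_{a1}x_{b2},\, x_{a2}x_{b1}$ by an invertible $2\times 2$ change of basis, so the $n^2$ polynomials are linearly independent and span $V^C(n,1)$, which has dimension $n^2$.

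For general $k$ I would prove spanning first and then match the cardinality of the spanning set with the dimension. Because $V^C(n,k)$ is precisely the bidegree $(k,k)$ subspace of $\mathbb{C}[x]^{\mathbb{C}^\times}$, it is spanned by products of $k$ elements of $V^C(n,1)$, and the $k=1$ case already guarantees that the $z_e^C$'s span $V^C(n,1)$. The core step is a family of quadratic exchange relations: whenever C-edges $e,e'$ cross, $z_e^C z_{e'}^C$ admits an expansion as a $\mathbb{C}$-linear combination of products $z_{e''}^C z_{e'''}^C$ where $\{e'',e'''\}$ is noncrossing. These relations are type C analogues of the three-term Pl\"ucker identity $\Delta_{ij}\Delta_{kl}=\Delta_{ik}\Delta_{jl}+\Delta_{il}\Delta_{jk}$ underlying Theorem 2.1, and they can be derived directly from the defining formulas $z_e^C = x_{a1}x_{a2}$, $(x_{a1}x_{b2}+x_{a2}x_{b1})/2$, or $(x_{a1}x_{b2}-x_{a2}x_{b1})/(2i)$ by expanding the relevant products and collecting terms. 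Iterating these relations reduces any length-$k$ product of generators to a linear combination of $z_f^C$'s with $f$ a C-multidissection, which proves spanning. Finally, to promote spanning to a basis I would enumerate C-multidissections by stratifying over the underlying unoriented noncrossing vertex-pair skeleton and multiplying by the $(m+1)$ choices of integrated-versus-segregated split at each nondiameter pair of multiplicity $m$; a doubled variant of Pylyavskyy's seminoncrossing-to-semistandard bijection (Proposition 2.2), adapted to the two-column type C setup, should match the count with $\binom{n+k-1}{k}^2$.

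The main obstacle is the verification of the type C exchange relations and the accompanying noncrossing bookkeeping. The three flavors of C-edges give rise to a rich array of local crossing configurations, each demanding its own explicit resolution identity, and one must check that iterated straightening terminates without re-introducing crossings. This combinatorial crux is exactly what Fomin and Zelevinsky dispatch in \cite{FZClusterII} via the general machinery of cluster algebras.
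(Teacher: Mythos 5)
The paper offers no proof of this statement at all --- it is imported verbatim from Fomin and Zelevinsky \cite{FZClusterII}, Proposition 12.13 --- so the only comparison available is between your sketch and their cluster-algebra argument. Your overall strategy (spanning by straightening, then matching cardinality against $\dim V^C(n,k) = {n+k-1 \choose k}^2$ from Lemma 3.1) is legitimate, and the easy steps are correct: the $k=1$ case is right, and $V^C(n,k)$ is indeed spanned by $k$-fold products of the $z_e^C$ since every bidegree $(k,k)$ monomial factors into bidegree $(1,1)$ pieces. But the two steps that carry all the weight are left as assertions. First, the exchange relations are never exhibited. They do exist --- for instance two crossing diameters satisfy $x_{a1}x_{a2}\, x_{b1}x_{b2} = \left(\frac{x_{a1}x_{b2}+x_{a2}x_{b1}}{2}\right)^{2} + \left(\frac{x_{a1}x_{b2}-x_{a2}x_{b1}}{2i}\right)^{2}$, and a diameter crossing a segregated pair resolves similarly --- but a complete proof needs an identity for every crossing configuration (diameter vs.\ diameter, diameter vs.\ integrated or segregated pair, and the several relative positions of two nondiameter pairs), plus a termination argument: inside a longer product, substituting the right-hand side can create new crossings with the remaining factors, so one needs a well-founded statistic that strictly decreases under rewriting. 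You flag this and then defer it to \cite{FZClusterII}, which makes the proposal circular as an independent proof; that deferral is exactly the citation the theorem statement already carries.

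Second, the enumeration. The claim that the number of $k$-edge C-multidissections of $\mathbb{P}_{2n}$ equals ${n+k-1 \choose k}^2$ is itself half the content of the theorem and is nowhere established; the ``doubled variant of Proposition 2.2'' is not defined, and the type A mechanism does not transfer verbatim. In type A each basis element $z_f^A$ is a weight vector for the diagonal torus, so the count is read off from a weight-preserving bijection with semistandard tableaux; here the nondiameter generators $z_e^C$ are not weight vectors (each mixes $x_{a1}x_{b2}$ and $x_{a2}x_{b1}$), so you must either construct an explicit bijection from $k$-edge C-multidissections to pairs of $k$-element multisets of $[n]$, or actually evaluate the sum over noncrossing skeletons with the $(m+1)$-fold splittings you describe and show it equals ${n+k-1 \choose k}^2$. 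Until both gaps are closed, what you have is a plausible plan --- in the spirit of the Kung--Rota straightening proof of Theorem 2.1 --- rather than a proof, and closing them would essentially reconstruct the work that Proposition 12.13 of \cite{FZClusterII} already encapsulates.
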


Let $g^C$ be the element of $GL_n(\mathbb{C})$ given by
\begin{equation*}
g^C = \begin{pmatrix}
0 & 0 & 0 & \dots & 0 & i \\
1 & 0 & 0 & \dots & 0 & 0 \\
0 & 1 & 0 & \dots & 0 & 0 \\
0 & 0 & 1 & \dots & 0 & 0 \\
   &     &    & \dots &     &   \\
0 & 0 & 0 & \dots & 1 & 0 
\end{pmatrix}.
\end{equation*}
Thus, $g^C$ is the permutation matrix for the long cycle in $\mathfrak{S}_n$, except that 
the upper right hand entry is $i$ instead of $1$.  The following observation
is a direct computation involving the action of $g^C$ on $z_e^C$ in the 
cases where $e$ is a diameter or a pair of centrally symmetric nondiameter
edges.  This latter case breaks up into two subcases depending on whether
the vertex $n$ is involved in the edges in the C-edge $e$.

\begin{obs}
Let $r$ denote the rotation operator and let $f$ be any C-multidissection of $\mathbb{P}_{2n}$.  
Then,
\begin{equation*}
g^C . z_f^C = z_{r.f}^C.
\end{equation*}
\end{obs}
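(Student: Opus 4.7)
The plan is to reduce Observation 3.3 to a finite case-by-case check at the level of individual C-edges. Since $g^C$ acts as a $\mathbb{C}$-algebra automorphism of $\mathbb{C}[x]$ (being induced by a group action) and $z_f^C = \prod_{e \in E_C} (z_e^C)^{f(e)}$ is multiplicative in the edge function $f$, while rotation $r$ permutes $E_C$ so that $z_{r.f}^C = \prod_{e \in E_C} (z_{r.e}^C)^{f(e)}$, it suffices to prove $g^C . z_e^C = z_{r.e}^C$ for each individual C-edge $e$.

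The preparatory calculation is to determine how $g^C$ acts on the two families of generators $x_{a1}$ and $x_{a2}$. On the $x_{a1}$, which span a copy of the defining representation, $g^C$ acts as a twisted cyclic permutation that sends $x_{a1}$ to $x_{(a+1)1}$ for $a < n$ and produces a phase $\pm i \cdot x_{11}$ in the wraparound slot $a = n$, coming from the upper-right entry of the matrix. On the $x_{a2}$ the action is precomposed with $\sigma(A) = (A^{-1})^T$, which yields the analogous cyclic shift but with the reciprocal phase $\mp i \cdot x_{12}$ in the wraparound slot. The crucial feature is that the two wraparound phases are reciprocal, so that $(g^C . x_{n1})(g^C . x_{n2}) = x_{11} x_{12}$.

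With these formulas in hand I would check $g^C . z_e^C = z_{r.e}^C$ for each type of C-edge. For a diameter $a\bar{a}$ with $a < n$ rotating to $(a+1)\overline{(a+1)}$ the identity is immediate, and for the wraparound diameter $n\bar{n}$ rotating to $\bar{1} 1 = 1 \bar{1}$ the two phases $\pm i$ cancel inside the product $x_{n1} x_{n2}$. For a generic integrated pair $(a\bar{b}, \bar{a}b)$ with $a < b < n$ rotation gives another integrated pair and the identity is manifest, and likewise for a generic segregated pair. The essential content lies in the wraparound cases, in which an integrated pair $(a\bar{n}, \bar{a}n)$ rotates into a segregated pair $(1(a+1), \bar{1}\overline{(a+1)})$ and, dually, a segregated pair $(an, \bar{a}\bar{n})$ rotates into an integrated pair. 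Here I would expand $g^C . z_e^C$ using the explicit action formulas and verify that the phases $\pm i$ coming from $g^C . x_{n1}$ and $g^C . x_{n2}$ precisely convert $\frac{x_{a1} x_{n2} + x_{a2} x_{n1}}{2}$ into $\frac{x_{11} x_{(a+1)2} - x_{12} x_{(a+1)1}}{2i}$, accounting simultaneously for the sign flip from $+$ to $-$ and for the change of normalization from $\frac{1}{2}$ to $\frac{1}{2i}$.

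The main obstacle is bookkeeping in these wraparound cases: no single step is deep, but the computation is somewhat delicate, and this is precisely the verification the author highlights by noting that the proof ``breaks up into two subcases depending on whether the vertex $n$ is involved in the edges in the C-edge $e$.'' Once the wraparound checks are carried out, the observation for arbitrary C-multidissections follows immediately from the algebra homomorphism property of $g^C$ applied to $\prod_e (z_e^C)^{f(e)}$.
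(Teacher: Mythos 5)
Your proposal is correct and follows essentially the same route as the paper, which offers no written-out argument beyond declaring the observation a direct computation on individual C-edges, split into the diameter case and the nondiameter case with its two subcases according to whether vertex $n$ is involved. Your reduction via multiplicativity, the computation of $g^C$ on the generators $x_{a1}, x_{a2}$ with reciprocal wraparound phases, and the identification of the integrated/segregated interchange at the wraparound as the only delicate point is exactly the intended verification.
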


\begin{thm}
Let $n \geq 2$ and $k \geq 0$.  Let $X$ be the set of C-multidissections of 
$\mathbb{P}_{2n}$ with $k$ edges and let the cyclic group $C = \mathbb{Z}_{n}$ act on 
$X$ by rotation.  The triple $(X, C, X(q))$ exhibits the cyclic sieving phenomenon, where
\begin{equation*}
X(q) = h_{(k,k)}(1,q,\dots,q^{n-1}).
\end{equation*}
\end{thm}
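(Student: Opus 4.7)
The plan is to mirror the proof of Theorem 2.5. By Theorem 3.2 the set $\{z_f^C : f \in X\}$ is a $\mathbb{C}$-basis of $V^C(n,k)$, and by Observation 3.3 the action of $g^C$ on this basis intertwines with the rotation action on $X$. Consequently, for every $d \geq 0$,
\[
|X^{r^d}| = \mathrm{tr}\bigl((g^C)^d \text{ on } V^C(n,k)\bigr),
\]
and the task reduces to showing this trace equals $X(\zeta^d)$, where $\zeta = e^{2\pi i/n}$.

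The next step is to diagonalize $g^C$. A direct check shows $(g^C)^n = i\cdot I$: iteratively applying $g^C$ to $e_j$ one reaches $e_n$ after $n-j$ steps, then $ie_1$, and then $ie_j$ after $j-1$ further steps. Hence $g^C$ is diagonalizable with eigenvalues the $n$-th roots of $i$; setting $\alpha = e^{\pi i/(2n)}$, these are $\alpha\zeta^0, \alpha\zeta^1, \ldots, \alpha\zeta^{n-1}$. Equivalently, $(g^C)^d$ is $GL_n(\mathbb{C})$-conjugate to $\alpha^d \cdot \mathrm{diag}(1, \zeta^d, \zeta^{2d}, \ldots, \zeta^{(n-1)d})$.

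Now I invoke Lemma 3.1, which identifies $V^C(n,k) \cong \mathrm{Sym}^k(\mathbb{C}^n) \otimes_{\mathbb{C}} \mathrm{Sym}^k((\mathbb{C}^n)^{*})$ as a $GL_n(\mathbb{C})$-module. Since trace is multiplicative on tensor products, the desired trace factors. On $\mathrm{Sym}^k(\mathbb{C}^n)$ the eigenvalues of $(g^C)^d$ are the degree-$k$ monomials in $\alpha^d, \alpha^d\zeta^d, \ldots, \alpha^d\zeta^{(n-1)d}$, whose sum is $\alpha^{dk} h_k(1, \zeta^d, \ldots, \zeta^{(n-1)d})$. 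On $\mathrm{Sym}^k((\mathbb{C}^n)^{*})$ the eigenvalues are inverted, giving $\alpha^{-dk} h_k(1, \zeta^{-d}, \ldots, \zeta^{-(n-1)d})$. Using that $\{1, \zeta^{-d}, \ldots, \zeta^{-(n-1)d}\} = \{1, \zeta^d, \ldots, \zeta^{(n-1)d}\}$ as multisets, the second factor simplifies to $\alpha^{-dk} h_k(1, \zeta^d, \ldots, \zeta^{(n-1)d})$. The powers of $\alpha$ cancel and the product equals $h_{(k,k)}(1, \zeta^d, \ldots, \zeta^{(n-1)d}) = X(\zeta^d)$.

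The main bookkeeping subtlety is the scalar $\alpha^{\pm dk}$, which arises because the eigenvalues of $g^C$ are roots of $i$ rather than of unity. That these scalars cancel between the two tensor factors is dictated by the precomposition with $\sigma(A) = (A^{-1})^T$ built into the $GL_n(\mathbb{C})$-action on the variables $x_{i2}$. This cancellation is precisely what makes the CSP polynomial $h_{(k,k)}(1,q,\ldots,q^{n-1})$ appear directly, with no $q$-shift of the kind seen in the type A result of Theorem 2.5.
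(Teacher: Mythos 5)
Your proposal is correct and follows essentially the same route as the paper's proof: reduce fixed-point counts to traces of $(g^C)^d$ via Theorem 3.2 and Observation 3.3, diagonalize $g^C$ (eigenvalues the $n$-th roots of $i$), and evaluate the trace via Lemma 3.1 as a product of two $h_k$ specializations whose $\alpha$-factors cancel. Your bookkeeping of the scalars as $\alpha^{\pm dk}$ and the explicit remark that $\{1,\zeta^{-d},\dots,\zeta^{-(n-1)d}\}=\{1,\zeta^{d},\dots,\zeta^{(n-1)d}\}$ as multisets are in fact slightly more careful than the paper's displayed computation.
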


\begin{proof}
Let $r: X \rightarrow X$ be the rotation operator and fix $d \geq 0$.  By Theorem 3.2 and 
Observation 3.3, the cardinality of the fixed point set $X^{r^d}$ is equal to the trace of the 
linear operator $(g^C)^d$ on the $GL_n(\mathbb{C})$-module $V^C(n,k)$.
We have that $(g^C)^d$ is $GL_n(\mathbb{C})$-conjugate to the diagonal matrix
$\alpha^d \mathrm{diag}(1,\zeta^d,\dots,\zeta^{(n-1)d}$), where $\zeta = e^{\frac{2 \pi i}{n}}$ and
$\alpha = e^{\frac{\pi i}{2n}}$.  Lemma 3.1 implies that the trace in question is equal to
\begin{equation*}
\alpha^d h_k(1,\zeta^d,\dots,\zeta^{(n-1)d}) \alpha^{-d} 
h_k(1,\zeta^{-d}, \dots, \zeta^{(1-n)d}) = h_{(k,k)}(1,\zeta^d, \dots, \zeta^{(n-1)d}) = X(\zeta^d),
\end{equation*}
as desired.
\end{proof}

As with Theorem 2.5, an enumerative sieve can be
used to relate the fixed point set sizes 
predicted by Theorem 3.4 to those predicted by Part 2 of Theorem 1.1.

The polynomial $h_k(1,q,\dots,q^{n-1})$ is the $q$-analog of a
multinomial coefficient, and therefore the polynomial $X(q)$ in
Theorem 3.4 has a nice product formula.  To our knowledge this is the 
first occurrence of the polynomial
$h_{(k,k)}(1,q,\dots,q^{n-1})$ in a CSP.

\section{Type D}

For $n \geq 2$, a \emph{D-edge} of $\mathbb{P}_{2n}$ is either a pair of 
centrally symmetric nondiameter edges of $\mathbb{P}_{2n}$ (which 
may be boundary edges) or a diameter of $\mathbb{P}_{2n}$ colored one
of two colors, solid/blue or dotted/red.  As in Section 1, two D-edges are said to 
\emph{cross} if they cross in the classical sense, except that
distinct diameters of the same color and identical diameters of different
colors do not cross.  A \emph{D-multidissection} of $\mathbb{P}_{2n}$ is a
function $f : E_D \rightarrow \mathbb{N}$, where $E_D$ is the 
set of D-edges in $\mathbb{P}_{2n}$ and whenever $e$ and $e'$ are crossing
D-edges, we have $f(e) = 0$ or $f(e') = 0$.  Also as in Section 1,
\emph{D-rotation} acts on the set of D-multidissections by standard rotation,
except that D-rotation switches the colors of the diameters.  

When counting
the edges in a D-multidissection, we count diameters as one edge and pairs
of centrally symmetric nondiameter edges as two edges.  More formally,
if $E_D(CS)$ denotes the set of centrally symmetric pairs of nondiameters
in $\mathbb{P}_{2n}$ and if $E_D(D)$ denotes the set of 
colored
diameters in
$\mathbb{P}_{2n}$, we say that a D-multidissection $f$ 
of $\mathbb{P}_{2n}$
has $k$ edges if
$\sum_{e \in E_D(CS)} 2f(e) + \sum_{e \in E_D(D)} f(e) = k$.
For example, the D-multidissection of $\mathbb{P}_8$ on the right in
Figure 4.1 has nine edges.  
This counting convention \emph{differs} from that of Part 3 of Theorem
1.1, where a pair of centrally symmetric nondiameters counted as a single 
edge.
Although this
counting convention is less natural from the point of view of cluster
complexes in which a pair of centrally symmetric nondiameters corresponds
to a single almost positive root, it is more natural from the
standpoint of edge enumeration in polygon dissections.

The proof of our CSP involving D-multidissections will use both
algebraic and combinatorial methods.
Recall the definition of the algebra $\mathcal{A}_n$ from Section
2.  We define the algebra $V^D(n)$ to be the quotient of $\mathcal{A}_{n+2}$
by the 
ideal generated by the
minor $\Delta_{n+1,n+2}$.  For $1 \leq i \leq n+2$ and 
$1 \leq j \leq 2$, the \emph{D-degree} of the variable 
$x_{ij} \in \mathcal{A}_{n+2}$ is defined to be 1 if 
$1 \leq i \leq n$ and 0 otherwise.  
The D-degree of a $2 \times 2$-minor $\Delta_{ij}$
remains well-defined in the
quotient ring $V^D(n)$ and induces a grading 
$V^D(n) \cong \bigoplus_{k \geq 0} V^D(n,k)$, where $V^D(n,k)$ is the 
subspace of $V^D(n)$ with homogeneous D-degree $k$.  We can use 
A-multidissections to write down a basis for $V^D(n,k)$.
Viewing A-multidissections as multidissections of edges, any
A-multidissection of $\mathbb{P}_{n+2}$ gives rise to a multiset 
of endvertices. 
If we label the vertices of $\mathbb{P}_{n+2}$ clockwise with 
$1, 2, \dots, n+2$,
it makes sense to count how many of these
vertices lie in $[n]$ with multiplicity.  For example, if $n = 7$,
we have that $9$ vertices of the A-multidissection of
$\mathbb{P}_9$ shown in Figure 2.1 lie in $[7]$ counting multiplicity.

\begin{lem}
Let $n \geq 2$ and $k \geq 0$.  Abusing notation, identify polynomials in
$\mathcal{A}_{n+2}$ with their images in $V^D(n)$.  We have that the set
$\{z^A_f \}$, where $f$ ranges over all A-multidissections of 
$\mathbb{P}_{n+2}$ with $f(n+1,n+2) = 0$ and $k$ vertices of $f$ 
counting multiplicity are in $[n]$, is a $\mathbb{C}$-basis
for $V^D(n,k)$.
\end{lem}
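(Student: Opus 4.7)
The plan is to use Theorem 2.1 to transport the basis $\{z^A_f\}$ of $\mathcal{A}_{n+2}$ through the quotient by the ideal $(\Delta_{n+1,n+2})$. The two things to verify are that $(\Delta_{n+1,n+2})$ has a basis consisting of exactly those $z^A_f$ with $f(n+1,n+2)\geq 1$, and that the D-grading behaves as advertised in the quotient.

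First I would observe that the D-degree of the minor $\Delta_{ij}=x_{i1}x_{j2}-x_{i2}x_{j1}$ is well-defined and equals $|\{i,j\}\cap[n]|$, since both monomials have the same total D-degree (the D-degree of $x_{ab}$ depends only on the row index $a$). In particular $\Delta_{n+1,n+2}$ is D-homogeneous of degree $0$, so $(\Delta_{n+1,n+2})$ is a graded ideal and the D-grading descends to $V^D(n)$. Moreover the D-degree of $z^A_f$ is
\begin{equation*}
\sum_{e=(i,j)}f(e)\cdot|\{i,j\}\cap[n]|,
\end{equation*}
which is precisely the number of endpoints of edges of $f$ lying in $[n]$, counted with multiplicity.

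Next, since $(n+1,n+2)$ is a boundary edge of $\mathbb{P}_{n+2}$ it crosses no other edge, so the map that increments the value at $(n+1,n+2)$ by one is a bijection from all A-multidissections $g$ onto those A-multidissections $f$ satisfying $f(n+1,n+2)\geq 1$, and it satisfies $z^A_f=\Delta_{n+1,n+2}\cdot z^A_g$. Combined with the linear independence of $\{z^A_f\}$ from Theorem 2.1, this shows that $\{z^A_f : f(n+1,n+2)\geq 1\}$ is a basis for $(\Delta_{n+1,n+2})$, so its complement $\{z^A_f : f(n+1,n+2)=0\}$ descends to a basis for $V^D(n)$. Intersecting with the D-degree condition from the previous paragraph yields the claimed basis for $V^D(n,k)$. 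The main point to get right is this boundary-edge bijection, which is what makes the ideal split cleanly off Theorem 2.1's basis; the remaining steps are routine grading bookkeeping.
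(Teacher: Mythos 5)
Your proposal is correct and follows essentially the same route as the paper: invoke Theorem 2.1, use the fact that the boundary edge $(n+1,n+2)$ crosses nothing to show the ideal $(\Delta_{n+1,n+2})$ is spanned by the $z^A_f$ with $f(n+1,n+2)>0$, and then sort the surviving basis elements by D-degree. Your explicit check that $\Delta_{n+1,n+2}$ is D-homogeneous of degree $0$ and the increment-the-boundary-edge bijection are just slightly more detailed versions of the steps the paper leaves implicit.
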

\begin{proof}
Let $J \subset \mathcal{A}_{n+2}$ be the ideal generated by the minor
$\Delta_{n+1,n+2}$.  By Theorem 2.1 it is enough to show that $J$ is
spanned over $\mathbb{C}$ by all polynomials of the form 
$z^A_f$, where $f$ is an A-multidissection of $\mathbb{P}_{n+2}$
satisfying $f(n+1,n+2) > 0$.  Clearly this span is contained in $J$.  
The reverse containment follows from the fact that the boundary 
edge $(n+1,n+2)$ crosses none of the edges in $\mathbb{P}_{n+2}$, and
can therefore be added to any A-multidissection without creating any 
crossings.
\end{proof}

As in Section 2, the space $\mathcal{A}_{n+2}$ is a 
$GL_{n+2}(\mathbb{C})$-module.  Considering the inclusion
$GL_n(\mathbb{C}) \times GL_2(\mathbb{C}) \subset GL_{n+2}(\mathbb{C})$ via
$(A,B) \mapsto \begin{pmatrix} A & 0 \\ 0 & B \end{pmatrix}$, 
the space $\mathcal{A}_{n+2}$ is also a $GL_n(\mathbb{C}) \times 
GL_2(\mathbb{C})$-module by restriction.
This latter action descends to the quotient $V^D(n)$ and respects
D-degree, giving $V^D(n) \cong \bigoplus_{k \geq 0} V^D(n,k)$ the structure
of a graded $GL_n(\mathbb{C}) \times GL_2(\mathbb{C})$-module.  The
isomorphism type of $V^D(n,k)$ can be determined using Lemma 4.1.

\begin{lem}
Let $n \geq 2$ and $k \geq 0$.  
For any partition $\lambda$, let $V_{\lambda}$ be the irreducible polynomial representation of
$GL_n(\mathbb{C})$ of highest weight $\lambda$.
The $GL_n(\mathbb{C}) \times GL_2(\mathbb{C})$-module
$V^D(n,k)$ is isomorphic to the representation of $GL_n(\mathbb{C}) \times GL_2(\mathbb{C})$ given by 
$\bigoplus_{\ell=0}^{\lfloor \frac{k}{2} \rfloor} 
V_{(k-\ell,\ell)} \otimes_{\mathbb{C}} Sym^{k-2\ell}(\mathbb{C}^2)$,
where $\mathbb{C}^2$ carries the defining representation of $GL_2(\mathbb{C})$.
\end{lem}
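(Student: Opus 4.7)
The plan is to compute the $GL_n(\mathbb{C}) \times GL_2(\mathbb{C})$-character of $V^D(n,k)$ and recognize it as the character of the direct sum $\bigoplus_{\ell} V_{(k-\ell,\ell)} \otimes Sym^{k-2\ell}(\mathbb{C}^2)$; since polynomial representations of reductive groups are semisimple and determined by their characters, this will complete the proof.  By Lemma 2.3 (applied with $n$ replaced by $n+2$), $\mathcal{A}_{n+2}$ is isomorphic as a $GL_{n+2}(\mathbb{C})$-module to $\bigoplus_{K \geq 0} U_K$, where $U_K$ is the irreducible polynomial representation of $GL_{n+2}(\mathbb{C})$ of highest weight $(K,K,0,\ldots,0)$.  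Restricting along the block-diagonal inclusion $GL_n(\mathbb{C}) \times GL_2(\mathbb{C}) \hookrightarrow GL_{n+2}(\mathbb{C})$, the trace of $\mathrm{diag}(y_1,\ldots,y_n,z_1,z_2)$ on $\mathcal{A}_{n+2}$ equals $\sum_{K \geq 0} s_{(K,K)}(y_1,\ldots,y_n,z_1,z_2)$.  Applying the branching identity $s_{(K,K)}(y,z) = \sum_\mu s_\mu(y)\, s_{(K,K)/\mu}(z_1,z_2)$ and enumerating semistandard tableaux of the two-row skew shape $(K,K)/(\mu_1,\mu_2)$ with entries in $\{1,2\}$ yields
\[s_{(K,K)/(\mu_1,\mu_2)}(z_1,z_2) = (z_1 z_2)^{K - \mu_1}\, h_{\mu_1 - \mu_2}(z_1,z_2).\]

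Next, I extract $V^D(n,k)$ from $\mathcal{A}_{n+2}$ via a short exact sequence.  The element $\Delta_{n+1,n+2}$ has D-degree $0$, is fixed by $GL_n(\mathbb{C})$, and scales by $\det$ under $GL_2(\mathbb{C})$.  Since $\mathcal{A}_{n+2}$ is an integral domain, multiplication by $\Delta_{n+1,n+2}$ is injective, preserves the D-grading, and becomes $GL_n(\mathbb{C}) \times GL_2(\mathbb{C})$-equivariant once the source is tensored with the one-dimensional character $\mathbb{C}_{(\mathrm{triv},\det)}$.  Writing $\mathcal{A}^{(k)}_{n+2}$ for the D-degree $k$ component, the resulting short exact sequence
\[0 \longrightarrow \mathcal{A}^{(k)}_{n+2} \otimes \mathbb{C}_{(\mathrm{triv},\det)} \xrightarrow{\,\cdot\,\Delta_{n+1,n+2}\,} \mathcal{A}^{(k)}_{n+2} \longrightarrow V^D(n,k) \longrightarrow 0\]
produces the character identity $\mathrm{ch}(V^D(n,k)) = (1 - z_1 z_2)\,\mathrm{ch}(\mathcal{A}^{(k)}_{n+2})$.

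Summing the $s_{(K,K)}$ identity over $K \geq 0$ and substituting $K' = K - \mu_1$ yields $\mathrm{ch}(\mathcal{A}_{n+2})(y,z) = \frac{1}{1 - z_1 z_2} \sum_{\mu_2 \leq \mu_1} s_{(\mu_1,\mu_2)}(y)\, h_{\mu_1-\mu_2}(z_1,z_2)$.  Because the D-degree of a torus-weight monomial coincides with its total $y$-exponent, multiplying by $(1 - z_1 z_2)$ and restricting to total $y$-degree $k$ (so that $\mu_1 + \mu_2 = k$, i.e.\ $\mu_1 = k - \ell$, $\mu_2 = \ell$ with $0 \leq \ell \leq \lfloor k/2 \rfloor$) produces
\[\mathrm{ch}(V^D(n,k))(y,z) = \sum_{\ell=0}^{\lfloor k/2 \rfloor} s_{(k-\ell,\ell)}(y_1,\ldots,y_n)\, h_{k-2\ell}(z_1,z_2),\]
which is exactly the character of $\bigoplus_{\ell=0}^{\lfloor k/2 \rfloor} V_{(k-\ell,\ell)} \otimes Sym^{k-2\ell}(\mathbb{C}^2)$.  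The main technical hurdle is simply keeping the polynomial degree, the D-degree, and the $GL_2$-determinant twist consistent across the several gradings in play; once this bookkeeping is set up, both the skew-Schur identity and the conclusion follow by routine manipulation of generating functions.
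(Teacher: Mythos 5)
Your proposal is correct and follows essentially the same route as the paper: both exploit the short exact sequence $0 \to \mathcal{A}_{n+2}\otimes\mathbb{C}_{(\mathrm{triv},\det)} \xrightarrow{\cdot\Delta_{n+1,n+2}} \mathcal{A}_{n+2} \to V^D(n) \to 0$ together with the $(K,K)$ decomposition of $\mathcal{A}_{n+2}$ (Lemma 2.3) to write $\mathrm{ch}\,V^D(n)$ as $(1-z_1z_2)\sum_{K}s_{(K,K)}(y,z)$ and then extract the D-degree $k$ piece. The only differences are cosmetic: you justify injectivity via the domain property rather than Lemma 4.1, and you rewrite the character via the skew branching rule and the identity $s_{(K,K)/(\mu_1,\mu_2)}(z_1,z_2)=(z_1z_2)^{K-\mu_1}h_{\mu_1-\mu_2}(z_1,z_2)$, where the paper instead restricts tableaux with no $z$'s in the last column to their $y$- and $z$-parts.
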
 

\begin{proof}
Let $\mathcal{A}_{n+2}(-z_1 z_2)$ denote
the $GL_n(\mathbb{C}) \times GL_2(\mathbb{C})$-module whose underlying 
vector space is the same as
$\mathcal{A}_{n+2}$ and whose module structure is obtained
by $(g,h) \cdot v := \mathrm{det}(h) (g,h).v$, where the raised dot
denotes action on $\mathcal{A}_{n+2}(-z_1 z_2)$ and the lowered dot 
denotes action on $\mathcal{A}_{n+2}$.  Lemma 4.1 implies that we 
have the following short exact sequence of $GL_n(\mathbb{C}) \times
GL_2(\mathbb{C})$-modules.  
\begin{equation*}
0 \rightarrow \mathcal{A}_{n+2}(-z_1 z_2) 
\xrightarrow{ \cdot \Delta_{n+1,n+2}} \mathcal{A}_{n+2} \longrightarrow
V^D(n) \rightarrow 0,
\end{equation*}
where the left hand map is multiplication by $\Delta_{n+1,n+2}$ and
the right hand map is the canonical projection. This short exact sequence
implies that the trace
of the action of $(\mathrm{diag}(y_1, \dots y_n), \mathrm{diag}(z_1,z_2)) 
\in GL_n(\mathbb{C}) \times GL_2(\mathbb{C})$ on $V^D(n)$ is equal to
\begin{equation*}
(1 - z_1 z_2) \sum_{m \geq 0} s_{(m,m)}(y_1, \dots, y_n, z_1, z_2) = 
\sum_T (yz)^T,
\end{equation*}
where the latter sum ranges over all semistandard tableaux $T$ of shape
$(m,m)$ and entries $y_1 < \dots < y_n < z_1 < z_2$ having no
$z$'s in the last column.  Restricting such a tableau $T$ to its $y$
and $z$ entries gives a semistandard tableau $T_1$ of some shape
$(m,\ell)$ in the $y$ entries and another semistandard tableau $T_2$ of
shape $(m - \ell)$ in the $z$ entries.  Such a tableau $T$ therefore
contributes a term to the Weyl character of $V^D(n,k)$, where $k = m+\ell$.
We have that $m - \ell = k - 2 \ell$, so that $T$ contributes a typical
term of $s_{(k - \ell, \ell)}(y_1, \dots, y_n) h_{k-2\ell}(z_1,z_2)$.
It follows that the Weyl character of $V^D(n,k)$ agrees with the Weyl 
character of the module in the statement of the lemma.
\end{proof} 
We remark that a combinatorial proof of Lemma 4.2 can be obtained
using Lemma 4.1 and an explicit weight-preserving bijection between
A-multidissections and seminoncrossing tableaux which computes the
Weyl character of $V^D(n,k)$.  A more general notion (due to
Pylyavskyy \cite{PN}) of seminoncrossing
tableaux for two-row shapes which are not rectangular is needed in this context. 

The module isomorphism in Lemma 4.2 can be combined with a geometric
realization of the type D cluster algebras in \cite{FZClusterII} to give
an enumeration of the $k$-edge D-multidissections of 
$\mathbb{P}_{2n}$ in terms of symmetric function evaluations.
In what follows, if $f(x_1, \dots, x_n)$ is a symmetric function
in $n$ variables, abbreviate
by $f(1^n)$ the evaluation $f(1, \dots, 1)$.  Also, adopt
the convention that the digon
$\mathbb{P}_2$ has two types of D-edges:  a `diameter' which 
can be colored solid/blue or dotted/red.  In accordance with our 
earlier
conventions regarding
crossings in D-multidissections, no multiset of D-edges in 
$\mathbb{P}_2$ has a crossing.  We count each diameter in 
$\mathbb{P}_2$ as a \emph{single} edge, so that the number of 
$k$-edge D-multidissections of $\mathbb{P}_2$ is equal to $k+1$ for
all $k$.

\begin{cor}
Let $n \geq 1$ and $k \geq 0$.  The number of $k$-edge D-multidissections
of $\mathbb{P}_{2n}$ is equal to
$2 h_{(k)}(1^n) + 2 h_{(k-1,1)}(1^n) + \cdots + 2 h_{(\frac{k+1}{2}, \frac{k-1}{2})}(1^n)$
if $k$ is odd, and
$2 h_{(k)}(1^n) + 2 h_{(k-1,1)}(1^n) + \cdots + 2 h_{(\frac{k}{2}+1, \frac{k}{2}-1)}(1^n) + h_{(\frac{k}{2},\frac{k}{2})}(1^n)$ if $k$ is even.
\end{cor}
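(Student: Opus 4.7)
The plan is to identify the count of $k$-edge D-multidissections of $\mathbb{P}_{2n}$ with $\dim_{\mathbb{C}} V^D(n,k)$ via Fomin--Zelevinsky's geometric realization of the type D cluster algebra, to compute that dimension using Lemma 4.2, and then to manipulate the resulting Schur-function sum into the stated $h$-polynomial form by Jacobi--Trudi and summation by parts.

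First I would invoke the geometric realization of the type $D_n$ cluster algebra in \cite{FZClusterII}: this equips $V^D(n)$ with a cluster algebra structure whose cluster monomials (which form a $\mathbb{C}$-basis) are naturally indexed by D-multidissections of $\mathbb{P}_{2n}$, with the D-degree of a cluster monomial equal to the number of D-edges of the corresponding D-multidissection. Restricting to D-degree $k$ identifies the set of $k$-edge D-multidissections with a $\mathbb{C}$-basis of $V^D(n,k)$, so it suffices to compute $\dim_{\mathbb{C}} V^D(n,k)$.

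Next, Lemma 4.2 gives
\begin{equation*}
\dim V^D(n,k) \;=\; \sum_{\ell = 0}^{\lfloor k/2 \rfloor} s_{(k-\ell,\ell)}(1^n) \, (k-2\ell+1),
\end{equation*}
since $\dim V_{(k-\ell,\ell)} = s_{(k-\ell,\ell)}(1^n)$ and $\dim Sym^{k-2\ell}(\mathbb{C}^2) = k-2\ell+1$. The Jacobi--Trudi identity $s_{(k-\ell,\ell)} = h_{k-\ell}h_\ell - h_{k-\ell+1}h_{\ell-1}$ (with $h_{-1} := 0$) then rewrites the $\ell$-th summand as $(P_\ell - P_{\ell-1})(k-2\ell+1)$, where $P_\ell := h_{(k-\ell,\ell)}(1^n)$. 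A summation by parts, using that the coefficients $c_\ell := k-2\ell+1$ satisfy $c_\ell - c_{\ell+1} = 2$, collapses the total to
\begin{equation*}
2\sum_{\ell=0}^{L-1} P_\ell \;+\; c_L \, P_L, \qquad L := \lfloor k/2 \rfloor.
\end{equation*}
Since $c_L$ equals $2$ when $k$ is odd and $1$ when $k$ is even, the two stated formulas emerge directly.

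I expect the main obstacle to be step one: extracting from \cite{FZClusterII} the precise statement that the cluster monomials of its type $D_n$ geometric realization coincide with explicit products of $2\times 2$ minors indexed by D-multidissections of $\mathbb{P}_{2n}$, with D-degree matching edge count. One must also carefully check the digon conventions for $\mathbb{P}_2$ so that the base case $n=1$ (where $\dim V^D(1,k) = k+1$, matching the $k+1$ D-multidissections of the digon) agrees with the general identification. Once that identification is secured, the remaining symmetric-function manipulation is routine Jacobi--Trudi telescoping.
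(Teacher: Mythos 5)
Your proposal is correct and follows essentially the same route as the paper: identify the count with $\dim_{\mathbb{C}} V^D(n,k)$ via the cluster monomial basis of the type D geometric realization (checking $n=1$ against the digon convention), evaluate the Weyl character of Lemma 4.2 at the identity, and convert the Schur-function sum to the stated $h$-sum. Your Jacobi--Trudi-plus-summation-by-parts step is just the inverse direction of the two-row Pieri expansion the paper invokes, so it is the same identity packaged differently.
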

\begin{proof}
If $n = 1$ this enumeration of D-multidissections of the digon can be
checked directly.  For $n > 1$, the geometric realization of the type
D cluster algebra given in \cite[Section 12.4]{FZClusterII} gives a 
cluster monomial
basis for
$V^D(n)$ where cluster variables are indexed by D-edges, clusters monomials
are indexed by D-multidissections, the cluster variables corresponding
to colored diameters have D-degree 1, and the cluster variables corresponding
to pairs of centrally symmetric nondiameters have D-degree 2.  
Therefore, the number of $k$-edge D-multidissections of $\mathbb{P}_{2n}$ is
equal to the dimension of the space $V^D(n,k)$.  This dimension is 
equal to the Weyl character in
the proof of Lemma 4.2 evaluated at the identity matrix
$y_1 = \dots = y_n = z_1 = z_2 = 1$.  For all $k$ and $\ell$ we have that
$h_{k - 2\ell}(1^2) = k - 2 \ell + 1$.  So, the number of $k$-edge
D-multidissections of $\mathbb{P}_{2n}$ is equal to  
\begin{equation*}
\sum_{\ell = 0}^{\lfloor \frac{k}{2} \rfloor}
(k - 2\ell + 1) s_{(k-\ell,\ell)}(1^n).
\end{equation*}
The equality of this expression and the expression in the statement of
the corollary is a consequence of Pieri's Rule.
\end{proof}

The following lemma will be useful in the proof of our type D CSP in
Theorem 4.6.  Roughly, it states that D-multidissections of $\mathbb{P}_{2n}$
which are invariant under a fixed even power of D-rotation are in
bijection 
with D-multidissections of a smaller polygon.  

\begin{lem} (Folding Lemma)
Let $k \geq 0$, $n \geq 2$, and let $d$ be an even divisor of $2n$.  Let $r$
be the D-rotation operator on D-multidissections of $\mathbb{P}_{2n}$. \\
1.  If $d$ divides $n$, the set of $k$-edge $r^d$-invariant D-multidissections of 
$\mathbb{P}_{2n}$ is in bijective correspondence with the set of
$\frac{kd}{n}$-edge D-multidissections of $\mathbb{P}_{2d}$.  \\
2.  If $d$ does not divide $n$, the set of $k$-edge $r^d$-invariant
D-multidissections of $\mathbb{P}_{2n}$ is in bijective correspondence with
the set of $\frac{kd}{2n}$-edge D-multidissections of $\mathbb{P}_d$. \\
We interpret the set of $m$-edge D-multidissections of a polygon to be
empty if $m$ is not an integer.
\end{lem}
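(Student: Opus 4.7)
The plan is to prove both parts by constructing explicit bijections, exploiting the observation that an $r^d$-invariant D-multidissection of $\mathbb{P}_{2n}$ is automatically $r^{2d}$-invariant and hence descends to a D-multidissection of the vertex quotient $\mathbb{P}_{2n}/\langle r^{2d}\rangle$. A direct computation with the order of $r^{2d}$ shows that this quotient polygon has $2d$ vertices in case 1 and $d$ vertices in case 2.

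For case 1 ($d\mid n$), write $m = n/d$; since $d$ is even, so is $n$. The subgroup $\langle r^{2d}\rangle$ has order $m$, and the residual group $\langle r^d\rangle/\langle r^{2d}\rangle$ has order two, acting on $\mathbb{P}_{2d}$ as central symmetry; this symmetry is already built into the D-edge convention, so $r^d$-invariance descends without further condition. A case split on the parity of $m$ is needed to match D-edge types: when $m$ is odd, colored diameters of $\mathbb{P}_{2n}$ fold to colored diameters of $\mathbb{P}_{2d}$ with color preserved (since $r^d$ is an even power of $r$); when $m$ is even, the two members of an $r^d$-orbit of diameters of $\mathbb{P}_{2n}$ correspond to the two color choices on a single diameter of $\mathbb{P}_{2d}$. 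Nondiameter pair orbits are handled similarly, and the inverse map unfolds a D-multidissection of $\mathbb{P}_{2d}$ by assigning each D-edge of $\mathbb{P}_{2n}$ the multiplicity of its image.

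For case 2 ($d\nmid n$, so $d = 2e$ with $e\mid n$ and $n/e$ odd), the oddness of $n/e$ implies $\langle r^d\rangle = \langle r^{2d}\rangle$, and the quotient polygon is $\mathbb{P}_d = \mathbb{P}_{2e}$ with no residual action. Here an $r^d$-orbit of nondiameter pairs in $\mathbb{P}_{2n}$ may fold onto a colored diameter of $\mathbb{P}_d$, with the two orientations of the orbit (swapping the two nondiameters in each pair) providing the two possible diameter colors. Edge counting in both cases follows by observing that each D-edge of the quotient corresponds to an $r^d$-orbit whose edge-count contribution in $\mathbb{P}_{2n}$ (diameters counted as $1$, nondiameter pairs as $2$) is $n/d$ times (case 1) or $2n/d$ times (case 2) its contribution in the quotient; summing gives the asserted scalings $k\mapsto kd/n$ and $k\mapsto kd/(2n)$.

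The principal difficulty is the detailed case analysis, driven by the parity of $m$ in case 1 and by the oddness of $n/e$ in case 2, required to correctly match diameters against nondiameter pairs and to track diameter colors across the fold. A further subtlety is checking that the noncrossing condition passes through the bijection in both directions --- specifically, that the $r^d$-orbit of a lifted noncrossing support remains noncrossing in $\mathbb{P}_{2n}$, which relies on the convention that like-colored distinct diameters and identical different-colored diameters are treated as noncrossing, together with the geometric fact that an $r^d$-invariant noncrossing support is constrained by how its lifts interact across fundamental domains of $\langle r^{2d}\rangle$.
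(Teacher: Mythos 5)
Your overall plan---fold along a quotient rotation, match $\langle r^d\rangle$-orbits of D-edges with D-edges of the smaller polygon, and transport multiplicities---is the same strategy the paper uses, but the proposal is missing the one genuinely nontrivial ingredient, and as written the bijection fails. The problem is the orbits of centrally symmetric nondiameter pairs that form an inscribed polygon on the two vertex classes lying over a diameter position of the small polygon. In the paper's proof such an orbit is sent not to a single D-edge but to the \emph{two-element} set consisting of the solid/blue and dotted/red copies of that diameter, and the inverse map is correspondingly non-naive: if the small-polygon multidissection carries the blue diameter with multiplicity $a$ and the red one with multiplicity $b$, one unfolds the inscribed polygon with multiplicity $\min(a,b)$ and only the majority color's diameter orbit with multiplicity $|a-b|$. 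Your stated inverse, ``assign each D-edge of $\mathbb{P}_{2n}$ the multiplicity of its image,'' breaks exactly here: when $a,b>0$ downstairs, the naive pullback puts positive multiplicity on both the blue and the red diameter orbits upstairs, and for $n/d\geq 2$ these orbits contain \emph{distinct} diameters of different colors, which cross; so the pullback is not a D-multidissection, and the inscribed-polygon configurations (e.g.\ the square in the paper's Figure 4.1, $n=8$, $d=4$) are never produced by your unfolding. Equivalently, your forward map is not surjective onto D-multidissections of $\mathbb{P}_{2d}$ containing both colors of a diameter. This $\min$-plus-leftover bookkeeping is precisely what makes the count work ($a+b=j$ gives $j+1$ configurations on both sides), and it is absent from your argument.

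Two further points. In Case 1 your parity analysis of diameters is off: when $m=n/d$ is even, both endpoints of a diameter of $\mathbb{P}_{2n}$ lie in the same $\langle r^{2d}\rangle$-vertex class, so under your vertex quotient a diameter collapses to a single vertex of $\mathbb{P}_{2d}$ and does not ``fold to a colored diameter'' at all; the correspondence has to be defined orbit-by-orbit (as in the paper), not as the literal quotient map, and a monochromatic diameter orbit is sent to the like-colored diameter regardless of the parity of $m$. In Case 2 the ``two orientations'' device does not exist: swapping the two members of a centrally symmetric nondiameter pair gives the same D-edge, so there is only one inscribed-polygon orbit over a given diameter position, and its edge count ($2n/e$ edges, scaled by $d/2n$) corresponds to two edges of $\mathbb{P}_d$, i.e.\ again to the blue--red pair of diameters rather than to a choice of a single colored diameter.
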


Lemma 4.4 does not hold if $d$ is odd.  In this case, the operator
$r^d$ swaps diameter colors and therefore no $r^d$-invariant D-multidissection
of $\mathbb{P}_{2n}$ contains a diameter.  When $d$ is odd, there is
a natural bijection between $k$-edge $r^d$-invariant D-multidissections
of $\mathbb{P}_{2n}$ and $\frac{k}{2}$-edge C-multidissections of 
$\mathbb{P}_{2n}$ which are invariant under the $d$-th power of rotation.

\begin{proof}
Let $C = \langle r^d \rangle$ be the cyclic group generated by $r^d$.  
Since $d$ is even, observe that $r^d$ preserves diameter colors and 
is therefore
the $d$-th power of the classical rotation operator.
The 
idea behind the bijective correspondences in Parts 1 and 2 is to map each
$C$-orbit 
$\mathcal{O}$ of mutually noncrossing
 D-edges in $\mathbb{P}_{2n}$ to a one- or two-element set 
$\psi(\mathcal{O})$ of 
noncrossing D-edges in $\mathbb{P}_{2d}$ (in Part 1) or 
$\mathbb{P}_{d}$ (in Part 2).  
Roughly speaking, $\psi(\mathcal{O})$ will be a pair of diameters of 
different colors when $\mathcal{O}$ is an inscribed polygon and
$\psi(\mathcal{O})$ will be a singleton consisting of a D-edge
obtained by `folding' $\mathbb{P}_{2n}$ otherwise.
This assignment $\mathcal{O} \mapsto \psi(\mathcal{O})$ will induce the desired
bijection $\Psi$ of D-multidissections.

For Part 1, label the vertices of $\mathbb{P}_{2n}$ clockwise with
the ordered pairs 
\begin{equation*}
(1,1), (2,1), \dots, \\ (d,1), (\bar{1},1), (\bar{2},1),
\dots, (\bar{d},1), \dots, (1, \frac{n}{d}), \dots,  (d, \frac{n}{d}), 
(\bar{1}, \frac{n}{d}) ,\dots, (\bar{d}, \frac{n}{d}).  
\end{equation*}
There are four 
types of $C$-orbits of 
mutually noncrossing
$D$-edges in $\mathbb{P}_{2n}$: \\
(1) monochromatic 
sets of diameters 
\begin{equation*}
\{ ((i,j), (\bar{i},j+ \lfloor \frac{n}{2d} \rfloor)) \,|\, 1 \leq j \leq \frac{n}{d} \} 
\end{equation*}
of $\mathbb{P}_{2n}$,
where
the second indices of ordered pairs are interpreted modulo
$\frac{n}{d}$ and $1 \leq i \leq d$, \\
(2) sets of `segregated' nondiameter edges of the form 
\begin{equation*}
\{ 
((i,k), (j,k)), ((\bar{i},k),(\bar{j},k)) \,|\, 1 \leq k \leq \frac{n}{d} \}, 
\end{equation*}
where $1 \leq i < j \leq d$, \\
(3) sets of `integrated' nondiameter edges of the form 
\begin{equation*}
\{ 
((j,k),(\bar{i},k)),((\bar{j},k),(i,k+1) \,|\, 1 \leq k \leq \frac{n}{d} \},
\end{equation*}
where the second indices of the ordered pairs are interpreted modulo 
$\frac{n}{d}$ and $1 \leq i < j \leq d$, and \\
(4) sets of `integrated' nondiameter edges of the form
\begin{equation*}
\{ ((i,k),(\bar{i},k)), ((\bar{i},k),(i,k+1)) \,|\, 1 \leq k \leq 
\frac{n}{d} \},
\end{equation*}
 where the second indices of the ordered pairs are 
interpreted modulo $\frac{n}{d}$ and $1 \leq i \leq d$.
Observe that these edges form an inscribed $\frac{2n}{d}$-gon in
$\mathbb{P}_{2n}$.\\
To every C-orbit $\mathcal{O}$ of mutually noncrossing D-edges in 
$\mathbb{P}_{2n}$ we associate a one- or two-element set $\psi(\mathcal{O})$
of mutually noncrossing D-edges in $\mathbb{P}_{2d}$ as follows. 
Label the vertices of $\mathbb{P}_{2d}$ clockwise with 
$1, 2, \dots, d, \bar{1}, \bar{2}, \dots, \bar{d}$.\\
(1) If $\mathcal{O}$ is a monochromatic set of diameters 
\begin{equation*}
\{ ((i,j), (\bar{i},j+ \lfloor \frac{n}{2d} \rfloor)) \,|\, 1 \leq j \leq \frac{n}{d} \},
\end{equation*}
let $\psi(\mathcal{O})$ be the singleton consisting of the diameter $i \bar{i}$ in 
$\mathbb{P}_{2d}$ which has the same color as the diameters in $\mathcal{O}$. \\
(2) If $\mathcal{O}$ is a set of `segregated' nondiameter edges of the form
\begin{equation*}
\{ 
((i,k), (j,k)), ((\bar{i},k),(\bar{j},k)) \,|\, 1 \leq k \leq \frac{n}{d} \},
\end{equation*}
let $\psi(\mathcal{O})$ be the singleton consisting of the D-edge in 
$\mathbb{P}_{2d}$
which is the pair $ij, \bar{i}\bar{j}$ of centrally symmetric nondiameters. 
\\
(3) If $\mathcal{O}$ is a set of `integrated' nondiameter edges of the form
\begin{equation*}
\{ 
((j,k),(\bar{i},k)),((\bar{j},k),(i,k+1) \,|\, 1 \leq k \leq \frac{n}{d} \},
\end{equation*}
let $\psi(\mathcal{O})$ be the singleton consisting of the D-edge in
$\mathbb{P}_{2d}$ which is the pair $i\bar{j}, j \bar{i}$ of centrally
symmetric nondiameters. \\
(4) If $\mathcal{O}$ is an inscribed polygon of the form
\begin{equation*}
\{ ((i,k),(\bar{i},k)), ((\bar{i},k),(i,k+1)) \,|\, 1 \leq k \leq 
\frac{n}{d} \},
\end{equation*} 
let $\psi(\mathcal{O})$ be the two element set of 
D-edges in $\mathbb{P}_{2d}$ consisting of a solid/blue and dotted/red copy of the 
diameter $i \bar{i}$. \\ 

Given any $C$-orbit $\mathcal{O}$ of mutually noncrossing D-edges, let
$\chi_{\mathcal{O}}$ be the D-multidissection of $\mathbb{P}_{2n}$ 
corresponding to $\mathcal{O}$ and let $\chi_{\psi(\mathcal{O})}$ be the
D-multidissection of $\mathbb{P}_{2d}$ corresponding to 
$\psi(\mathcal{O})$.
Any $C$-invariant D-multidissection $f$ of $\mathbb{P}_{2n}$ can be
written uniquely as a sum 
$f = \sum_{\mathcal{O}} c_{\mathcal{O}} \chi_{\mathcal{O}}$, where
the sum is over all $C$-orbits $\mathcal{O}$ of mutually noncrossing
D-edges and the $c_{\mathcal{O}}$ are nonnegative integers.
It is easy to verify that if the D-edges in two $C$-orbits 
$\mathcal{O}$ and $\mathcal{O}'$ are mutually nonintersecting, then 
the D-edges in the sets $\psi(\mathcal{O})$ and $\psi(\mathcal{O}')$ are 
mutually nonintersecting, as well. 
Thus, the map $\Psi(f) := \sum_{\mathcal{O}} c_{\mathcal{O}} 
\chi_{\psi(\mathcal{O})}$ from the D-edges in $\mathbb{P}_{2d}$ to 
$\mathbb{N}$ is a D-multidissection of $\mathbb{P}_{2d}$.   

One checks 
that $\Psi$ maps $k$-edge D-multidissections to
$\frac{kd}{n}$-edge D-multidissections and that $\Psi$ gives the 
bijective correspondence in Part 1.  
Given a D-multidissection $g$ of $\mathbb{P}_{2d}$, to construct 
$\Psi^{-1}(g)$ one first checks if $g$ contains diameters of both 
colors.  If not, for every D-edge $e$ occurring in $g$, one includes
the edges in the unique $C$-orbit $\mathcal{O}$ with 
$\psi(\mathcal{O}) = \{ e \}$ in $\Psi^{-1}(g)$ with the appropriate 
multiplicity.  If $g$ contains diameters of both colors, one includes the 
the appropriate inscribed $\frac{2d}{n}$-gon in
$\Psi^{-1}(g)$ with multiplicity equal to the minimum of the multiplicities
of the solid/blue and dotted/red diameters in $g$ and then includes the $C$-orbits
corresponding to the remaining D-edges in $g$ in $\Psi^{-1}(g)$
with the appropriate multiplicities.  

An example of the map $\Psi$
for $n = 8$ and $d = 4$ is shown in Figure 4.1.  Repeated edges
correspond to edges counted with multiplicity and dashed edges are 
boundary sides which are counted with multiplicity zero in the 
D-multidissection.  Observe that the inscribed $4$-gon 
in $\mathbb{P}_{16}$
maps to a pair
of solid/blue and dotted/red diameters in $\mathbb{P}_8$.  
The $C$-orbit of
solid/blue diameters in $\mathbb{P}_{16}$ maps to an additional
solid/blue diameter
in $\mathbb{P}_8$.

\begin{figure}
\centering
\includegraphics[scale=.5]{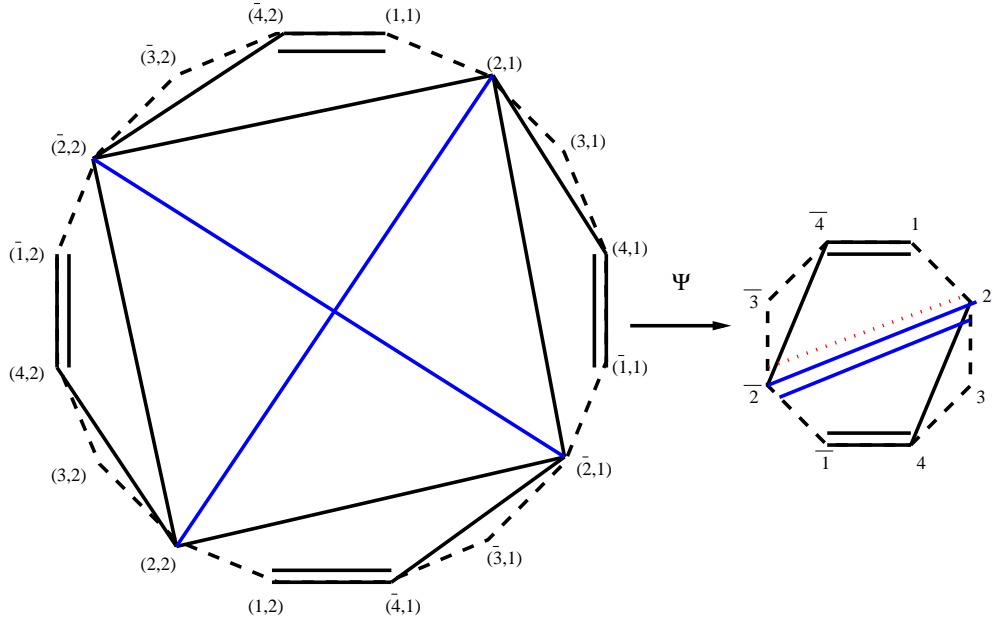}
\caption{The action of the map $\Psi$ on a D-multidissection with
$n = 8$ and $d = 4$}
\end{figure}

The proof of Part 2 mimics the proof of Part 1 and is left to the
reader.  The reason why the polygon $\mathbb{P}_d$ appears in
Part 2 instead of $\mathbb{P}_{2d}$ is that the antipodal image of 
a nondiameter edge $(i,j)$ in $\mathbb{P}_{2n}$ is not contained in
the orbit of $(i,j)$ under $d$-fold rotation if $d$ does not divide $n$.
In the case $d = 2$ one must apply our conventions regarding 
D-multidissections of the digon $\mathbb{P}_2$.    
\end{proof}

We will also need a result on the specialization of homogeneous
symmetric functions at roots of unity which is implicit in a CSP
of Reiner, Stanton, and White.

\begin{lem} \cite[Theorem 1.1, Part a]{RSWCSP}
Let $\zeta$ be a root of unity of order $d$.  If $d | n$ we have the polynomial
evaluation
\begin{equation*}
h_k(1,\zeta, \zeta^2, \dots, \zeta^{n-1}) = \begin{cases}
h_{\frac{k}{d}}(1^{\frac{n}{d}}), & \text{if $d | k$} \\
0 , & \text{otherwise.}
\end{cases}
\end{equation*}
\end{lem}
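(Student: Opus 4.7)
The plan is to compute the generating function $\sum_{k \geq 0} h_k(1,\zeta,\dots,\zeta^{n-1}) t^k$ in closed form and then read off its coefficients. First I would invoke the standard identity
\[
\sum_{k \geq 0} h_k(x_1,\dots,x_n) t^k = \prod_{i=1}^n \frac{1}{1-x_i t}
\]
and specialize $x_i = \zeta^{i-1}$. Because $\zeta$ has multiplicative order exactly $d$ and $d \mid n$, as $i$ ranges over $0,1,\dots,n-1$ the power $\zeta^i$ assumes each value in $\{1,\zeta,\dots,\zeta^{d-1}\}$ with multiplicity $n/d$. Hence
\[
\prod_{i=0}^{n-1} \frac{1}{1-\zeta^i t} \;=\; \prod_{j=0}^{d-1} \frac{1}{(1-\zeta^j t)^{n/d}}.
\]

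Next I would use the factorization $z^d - 1 = \prod_{j=0}^{d-1}(z - \zeta^j)$; substituting $z = 1/t$ and clearing $t^d$ from both sides gives the identity $\prod_{j=0}^{d-1}(1 - \zeta^j t) = 1 - t^d$. Plugging this into the previous display yields
\[
\sum_{k \geq 0} h_k(1,\zeta,\dots,\zeta^{n-1}) t^k \;=\; \frac{1}{(1-t^d)^{n/d}}.
\]

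To finish, I would expand the right-hand side via the one-variable specialization $(1-s)^{-m} = \sum_{\ell \geq 0} h_\ell(1^m) s^\ell$, applied with $s = t^d$ and $m = n/d$, to obtain
\[
\frac{1}{(1-t^d)^{n/d}} \;=\; \sum_{\ell \geq 0} h_\ell(1^{n/d}) t^{d\ell}.
\]
Matching coefficients of $t^k$ on the two sides of the resulting equality gives the claim: the coefficient vanishes unless $d \mid k$, and when $k = d\ell$ it equals $h_{k/d}(1^{n/d})$. There is no real obstacle in this argument; the only point that deserves care is the root-of-unity factorization $\prod_{j=0}^{d-1}(1-\zeta^j t) = 1 - t^d$, which however is immediate from the splitting of $z^d - 1$ over $\mathbb{C}$.
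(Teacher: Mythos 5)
Your argument is correct, and it is worth noting that the paper itself gives no proof of this lemma at all: it simply cites Reiner--Stanton--White, where the evaluation is implicit in their cyclic sieving result for multisets, i.e., it is extracted from the behavior of the $q$-binomial coefficient ${n+k-1 \brack k}_q = h_k(1,q,\dots,q^{n-1})$ at roots of unity, proved there by counting rotation-invariant multisets. Your route is different and more self-contained: you work directly with the generating function $\sum_k h_k t^k = \prod_i (1-x_i t)^{-1}$, use that the specialization values $\zeta^0,\dots,\zeta^{n-1}$ repeat the $d$-th roots of unity each $n/d$ times, collapse the product via $\prod_{j=0}^{d-1}(1-\zeta^j t) = 1 - t^d$, and read off coefficients of $(1-t^d)^{-n/d}$. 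Every step checks out, including the expansion $(1-s)^{-m} = \sum_\ell h_\ell(1^m) s^\ell$ with $s = t^d$, $m = n/d$. What your approach buys is a purely algebraic, two-line-identity proof requiring no combinatorial model and no reference to cyclic sieving; what the cited approach buys is that the same statement comes packaged with its combinatorial meaning (fixed-point counts for rotation acting on multisets), which is the form in which the paper actually uses it downstream in Theorem 4.6.
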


Our CSP for D-multidissections is as follows.

\begin{thm}
Let $n \geq 2$ and $k \geq 0$.  Let $X$ be the set of D-multidissections of $\mathbb{P}_{2n}$ with 
$k$ edges and let the cyclic group $C = \mathbb{Z}_{2n}$ act on $X$ by D-rotation.  The triple
$(X, C, X(q))$ exhibits the cyclic sieving phenomenon, where
\begin{equation*}
X(q) = 
\sum_{\ell = 0}^{\lfloor \frac{k}{2} \rfloor} s_{(k-\ell,\ell)}(1,q^2,\dots,q^{2(n-1)}) h_{k - 2 \ell}(1,q^n).
\end{equation*}
\end{thm}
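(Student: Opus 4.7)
The plan is to follow the standard CSP template: fix $d \geq 0$, let $\zeta = e^{\pi i/n}$ be a primitive $2n$-th root of unity, and show $|X^{r^d}| = X(\zeta^d)$, where $r$ denotes the D-rotation operator. The key observation is that $r^d$ preserves diameter colors precisely when $d$ is even, so the argument splits into two qualitatively different cases.

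For $d$ odd, every $r^d$-invariant D-multidissection of $\mathbb{P}_{2n}$ avoids diameters entirely, and the natural bijection (noted in the remarks after Lemma 4.4) identifies the $r^d$-invariant $k$-edge D-multidissections with the $(k/2)$-edge C-multidissections of $\mathbb{P}_{2n}$ invariant under the $d$-th power of ordinary rotation. Combined with the type C CSP (Theorem 3.4), and using $\zeta^{2d} = e^{2\pi i d/n}$, this gives $|X^{r^d}| = h_{(k/2,k/2)}(1, \zeta^{2d}, \dots, \zeta^{2d(n-1)})$ when $k$ is even and zero when $k$ is odd. On the polynomial side, $\zeta^{dn} = (-1)^d = -1$, and since $h_m(1,-1)$ vanishes for $m$ odd while equaling $1$ for $m$ even, $X(\zeta^d) = 0$ when $k$ is odd. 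When $k$ is even the expression collapses to $\sum_{\ell=0}^{k/2} s_{(k-\ell,\ell)}(1, \zeta^{2d}, \dots, \zeta^{2d(n-1)})$, which telescopes via the Jacobi--Trudi identity $s_{(a,b)} = h_a h_b - h_{a+1} h_{b-1}$ to $h_{(k/2,k/2)}(1, \zeta^{2d}, \dots, \zeta^{2d(n-1)})$, matching $|X^{r^d}|$.

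For $d$ even, $r^d$ is a power of classical rotation and the Folding Lemma (Lemma 4.4) applies directly. If $d \mid n$, then $|X^{r^d}|$ equals the number of $(kd/n)$-edge D-multidissections of $\mathbb{P}_{2d}$, which Corollary 4.3 expresses as $\sum_{\ell} (\frac{kd}{n} - 2\ell + 1)\, s_{(\frac{kd}{n}-\ell,\ell)}(1^d)$; the case $d \nmid n$ produces the analogous formula on $\mathbb{P}_d$. On the polynomial side, since $\zeta^{dn} = 1$ we have $h_{k-2\ell}(1,1) = k-2\ell+1$, and each $s_{(k-\ell,\ell)}$ must be evaluated at the multiset $\{1, \zeta^{2d}, \dots, \zeta^{2d(n-1)}\}$, which consists of $\gcd(d,n)$ copies of each $\frac{n}{\gcd(d,n)}$-th root of unity. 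Expanding via Jacobi--Trudi and applying Lemma 4.5 to the individual $h$-factors annihilates most terms and reduces the surviving Schur specializations to products of complete homogeneous symmetric functions evaluated at an all-ones vector of length $\gcd(d,n)$; reassembling these products via Jacobi--Trudi in reverse recovers precisely the Schur expansion predicted by Corollary 4.3 on the folded polygon.

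I expect the genuine labor to lie in Case 2. The telescoping in Case 1 is clean, but in Case 2 one must carefully track which Jacobi--Trudi summands survive the root-of-unity specialization, handle the subcases $d \mid n$ and $d \nmid n$ uniformly (the latter requires applying Corollary 4.3 with $n$ replaced by $d/2$ since $\mathbb{P}_d = \mathbb{P}_{2(d/2)}$), and check that the surviving $h$-products reassemble into the promised Schur expansion on the folded polygon. The delicate matching between the symmetric-function specialization and the enumeration coming from the Weyl character of $V^D(d, kd/n)$ is the main algebraic obstacle; once this matching is carried out, both sides of the CSP agree term by term.
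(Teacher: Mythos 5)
Your proposal is correct and follows essentially the same route as the paper's own proof: the same case split on the parity of $d$, with odd $d$ handled via the no-diameter observation, the bijection to rotation-invariant C-multidissections, and Theorem 3.4, and even $d$ handled via the Folding Lemma 4.4 together with Corollary 4.3 and the root-of-unity evaluation Lemma 4.5. The only cosmetic difference is your use of Jacobi--Trudi telescoping where the paper invokes Pieri's rule to pass between the Schur and homogeneous expansions (and your edge count $kd/n$ for the $d \mid n$ subcase agrees with the statement of Lemma 4.4), so no changes are needed.
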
  
\begin{proof}
Let $r$ be the D-rotation operator and let $\zeta$ be a root of unity of order $2n$.  We equate the 
fixed point sets corresponding to powers 
$r^d$
of $r$ with $d | 2n$ with the appropriate
polynomial evaluations.  
Our proof breaks up into several cases depending on the parity of $d$
and whether $d$ divides $n$.

If $d$ is even and $d | n$, we have that $h_{k - 2 \ell}(1,(\zeta^d)^n) = 
h_{k - 2 \ell}(1^2) = k - 2 \ell + 1$ for all $\ell$.  Pieri's Rule, 
Corollary 4.3, and Lemma 4.5 imply that the polynomial evaluation
$X(\zeta^d)$ is equal to the number of D-multidissections of
$\mathbb{P}_{2d}$ with $\frac{kd}{2n}$ edges.  By Part 1 of Lemma 4.4,
this polynomial evaluation is the fixed point set cardinality 
$|X^{r^d}|$.  The case of $d$ even with $d$ not dividing $n$ is similar, 
but uses Part 2 of Lemma 4.4.

If $d$ is odd, for 
$\ell \leq \lfloor \frac{k}{2} \rfloor$ we have that $h_{k - 2 \ell}(1,(\zeta^d)^n) = 
h_{k - 2 \ell}(1,-1)$, which is $0$ if $k$ is odd and $1$ if $k$ is even. 
An easy exercise using 
Pieri's Rule implies that
$X(\zeta^d)$ is equal to
$h_{(\frac{k}{2},\frac{k}{2})}(1,\zeta^{2d}, \dots, \zeta^{2(n-1)d})$ if 
$k$ is even and $X(\zeta^d) = 0$ if $k$ is odd.  On the other hand, since
$r^d$ reverses diameter colors, no $r^d$-invariant D-multidissection of
$\mathbb{P}_{2n}$ can contain a diameter.  It follows that 
$r^d$-invariant D-multidissections of $\mathbb{P}_{2n}$ are in natural
bijection with C-multidissections of $\mathbb{P}_{2n}$ which are 
invariant under $d$ powers of rotation.  The equality of the fixed point
set cardinality $|X^{r^d}|$ and the polynomial evaluation
$X(\zeta^d)$ is a consequence of Theorem 3.4. 
\end{proof}

Our proof of Theorem 4.6 relied both on the algebraic result in Lemma 4.2
and the combinatorial result in Lemma 4.4.  A purely algebraic approach
is possible modulo the following conjecture regarding a possible basis
for the space $V^D(n,k)$.  For $n \geq 2$, 
recall that we
label the vertices of 
$\mathbb{P}_{2n}$ clockwise with $1, 2, \dots, n, \bar{1}, \bar{2}, \dots,
\bar{n}$.  To any D-edge $e$ in $\mathbb{P}_{2n}$, we associate an element
$z_e^D \in V^D(n)$ as follows.  Abusing notation, identify polynomials
in $\mathcal{A}_{n+2}$ with their images in the quotient $V^D(n)$.  If 
$e$ is a solid/blue diameter of the form $i \bar{i}$ for $1 \leq i \leq n$, let
$z_e^D = \Delta_{i,n+1}$.  If $e$ is a dotted/red diameter of the form
$i \bar{i}$ for $1 \leq i \leq n$, let $z_e^D = \Delta_{i,n+2}$.  If
$e$ is a pair of centrally symmetric nondiameters of the form
$ij, \bar{i}\bar{j}$ for $1 \leq i < j \leq n$, let 
$z_e^D = \Delta_{i,n+1} \Delta_{j,n+2} + \Delta_{ij}$.  Finally, if $e$
is a pair of centrally symmetric nondiameters of the form
$i \bar{j}, \bar{i} j$ for $1 \leq i < j \leq n$, let 
$z_e^D = \Delta_{i,n+1} \Delta_{j,n+2} - \Delta_{ij}$.  Observe that
$z_e^D \in V^D(n,1)$ if $e$ is a diameter of either color and 
$z_e^D \in V^D(n,2)$ if $e$ is a pair of centrally symmetric
nondiameters.  Given a D-multidissection $f$, define 
$z_f^D \in V^D(n)$ by
\begin{equation*}
z_f^D := \prod_{e \in E_D}(z_e^D)^{f(e)}.
\end{equation*}
For example, if $f$ is the D-multidissection of $\mathbb{P}_8$ on the right
of Figure 4.1, then
\begin{equation*}
z_f^D = (\Delta_{25}\Delta_{46}+\Delta_{24})(\Delta_{15}\Delta_{46}-
\Delta_{14})^2(\Delta_{25})(\Delta_{26})^2 \in V^D(4,9).
\end{equation*}
\begin{conj}
The set $\{ z_f^D \}$, where $f$ ranges over all D-multidissections
of $\mathbb{P}_{2n}$, is a $\mathbb{C}$-basis for $V^D(n)$.
\end{conj}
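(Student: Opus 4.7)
The plan is to reduce the conjecture to linear independence and then attack that via a transition-matrix analysis in the basis of Lemma~4.1. By Lemma~4.2 the dimension of $V^D(n,k)$ equals the Weyl character specialization $\sum_{\ell=0}^{\lfloor k/2\rfloor} s_{(k-\ell,\ell)}(1,\dots,1)\,h_{k-2\ell}(1,1)$, which by Corollary~4.3 coincides with the number of $k$-edge D-multidissections of $\mathbb{P}_{2n}$. Hence $|\{z_f^D\}|=\dim V^D(n,k)$ in each D-degree, and it is enough to prove that the $z_f^D$ are linearly independent.

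For linear independence I would expand each $z_f^D$ into the basis $\{z_g^A\}$ of Lemma~4.1 and study the resulting transition matrix. Assign to each noncrossing A-multidissection $g$ the statistic $s(g)$ counting edges of $g$ with both endpoints in $[n]$; every Plücker relation surviving in $V^D(n)$---including $\Delta_{i,n+1}\Delta_{j,n+2}=\Delta_{i,n+2}\Delta_{j,n+1}$ inherited from $\Delta_{n+1,n+2}=0$---preserves $s$, so the $s$-grading is well-defined on $V^D(n,k)$ and agrees with the $\ell$-index of Lemma~4.2. Expanding each pair factor $(z_e^D)^{f(e)}=(A_e+\epsilon_e B_e)^{f(e)}$ with $A_e=\Delta_{i,n+1}\Delta_{j,n+2}$, $B_e=\Delta_{ij}$, and $\epsilon_e=\pm 1$ (segregated/integrated) via the binomial theorem and sorting by $s$, I would look for a combinatorial leading term for each $z_f^D$. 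For the cleanest class of $f$---those where no index pair $(i,j)$ simultaneously carries a segregated and an integrated pair---the map $\psi$ sending blue/red diameters to $(i,n+1)/(i,n+2)$, segregated pairs to short edges $(i,j)$, and integrated pairs to the long-edge product $\{(i,n+2),(j,n+1)\}$ yields a noncrossing A-multidissection; it can be shown injective on valid $f$'s using the D-noncrossing constraints (for example a blue and a red diameter at distinct vertices would violate D-noncrossing), and $z_{\psi(f)}^A$ appears in $z_f^D$ with coefficient $+1$.

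The main obstacle is the interaction when $f$ carries both a segregated and an integrated pair at the same index pair $(i,j)$: with $A=A_e$ and $B=B_e$ one has $z_S^D z_I^D=(A+B)(A-B)=A^2-B^2$, the cross terms $AB$ cancelling, so the coefficient of $z_{\psi(f)}^A$ in $z_f^D$ is in fact zero and the naive leading-term argument collapses. One must therefore group D-multidissections by their underlying multiset of index pairs and, at each such pair, use the change of basis $\{A+B,A-B\}\leftrightarrow\{A,B\}$ to show that the $z_f^D$ corresponding to all segregated/integrated labelings of that multiset (combined with compatible diameter configurations) jointly span the correct subspace of $V^D(n,k)$. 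A cleaner alternative is to verify directly that our $z_e^D$'s satisfy the exchange relations of the type $D_n$ cluster algebra on $V^D(n)$, extending the geometric realization of \cite{FZClusterII}, and then invoke Fomin--Zelevinsky's theorem that cluster monomials form a basis in finite type---the purely algebraic route alluded to just above the conjecture. I expect verifying these exchange relations to be the most technical step in either approach.
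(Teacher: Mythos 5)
The statement you are proving is left as an open conjecture in the paper: the author gives no proof, only the reduction (via the cluster-monomial basis of \cite{FZClusterII} and the dimension count of Lemma 4.2/Corollary 4.3) that it suffices to establish either spanning or linear independence of the set $\{z_f^D\}$. Your opening reduction therefore agrees with the paper, but what follows is a plan rather than a proof, and its two proposed routes both have problems. The transition-matrix route founders exactly where you say it does: when a segregated and an integrated pair sit on the same index pair $(i,j)$, the product $(A+B)(A-B)=A^2-B^2$ kills the would-be leading term, and the subsequent step --- ``group D-multidissections by their underlying multiset of index pairs and show the corresponding $z_f^D$ jointly span the correct subspace'' --- is precisely the unresolved content of the conjecture, not a reduction of it. You would also need to make the triangularity claim precise against the noncrossing constraints in the A-basis of Lemma 4.1 (your map $\psi$ sends an integrated pair to the two minors $\Delta_{i,n+2}\Delta_{j,n+1}$, and one must check that the resulting multiset of columns is again noncrossing and that distinct valid $f$ give distinct $\psi(f)$ even after the relation $\Delta_{i,n+1}\Delta_{j,n+2}=\Delta_{i,n+2}\Delta_{j,n+1}$ in $V^D(n)$ is taken into account); none of this is carried out.

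The ``cleaner alternative'' is explicitly foreclosed by the paper: immediately after stating the conjecture the author remarks that the polynomials $z_e^D$ attached to D-edges \emph{do not} satisfy the type D exchange relations and are \emph{not} related to the cluster monomial basis of \cite{FZClusterII} by a unitriangular transition matrix. So you cannot hope to verify the exchange relations for these elements and then invoke the Fomin--Zelevinsky cluster-monomial basis; any successful argument must work directly with the $z_f^D$ (e.g., by completing the grouping/change-of-basis analysis you sketch, or by exhibiting some other triangularity with respect to a cleverer order), and that is exactly the part that remains missing both in your proposal and in the paper itself.
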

The geometric realization of the type D cluster algebras in 
\cite{FZClusterII} implies that one need only show that the 
set of Conjecture 4.7 spans $V^D(n)$ or is linearly independent.  
The polynomials which we have attached to D-edges do not satisfy 
the type D exchange relations and are not related to the cluster monomial
basis presented in \cite{FZClusterII} via a unitriangular
transition matrix.  Assuming Conjecture 4.7 is true, we can give
the following alternative proof of Theorem 4.6.
\begin{proof} (of Theorem 4.6, assuming Conjecture 4.7)
Embed the direct product $\mathfrak{S}_n \times \mathfrak{S}_2$ of 
symmetric groups into $GL_n(\mathbb{C}) \times GL_2(\mathbb{C})$ via
permutation matrices.  
Writing permutations in cycle notation,
let $g^D$ be the image of
$(1,2,\dots,n) \times (1,2)$ under this embedding.  It is routine
to verify that $g^D$ maps the module element $z_e^D$ to the module
element $z_{r.e}^D$ for all D-edges $e$, where $r$ is the D-rotation 
operator.  The homogeneity of the $z_e^D$ combined with Conjecture 4.7
implies that the set $\{ z_f^D \}$, where $f$ ranges over all
D-multidissections of $\mathbb{P}_{2n}$ with $k$ edges, forms 
a $\mathbb{C}$-basis for the space $V^D(n,k)$.  The desired CSP follows
from using the Weyl character evaluation in Lemma 4.2 to calculate
the traces of powers of the operator $g^D$. 
\end{proof}

We close by noting that the acting groups in our representation theoretic
proofs in types A and C and our conjectural representation theoretic proof
in type D are not equal to the associated Lie group outside of type A.  In
the type C case, the action 
involved
can be reformulated as an action of the intersection
symplectic group $Sp_{2n}(\mathbb{C}) = \{ A \in GL_{2n}(\mathbb{C}) \,|\,
A \begin{pmatrix} 0 & I_n \\ -I_n & 0 \end{pmatrix} A^T =
\begin{pmatrix} 0 & I_n \\ -I_n & 0 \end{pmatrix} \}$ with the Levi
subgroup
$GL_n(\mathbb{C}) \times GL_n(\mathbb{C})$ of block diagonal matrices in
$GL_{2n}(\mathbb{C})$.  In type D, we can see no obvious relation between
the acting group $GL_n(\mathbb{C}) \times GL_2(\mathbb{C})$ and the even
special orthogonal groups.  This is perhaps not surprising given
that the geometric realizations of finite type cluster algebras typically
have little obvious connection with the Lie group of the same 
Cartan-Killing type.

\section{Acknowledgments}
The author is grateful to Sergey Fomin, Gregg Musiker, and 
Vic Reiner for helpful conversations.  The author would also like to thank
an anonymous referee for streamlining the proofs of Lemmas 3.1 and 4.2. 

\bibliography{../bib/my}
\end{document}